\newtheorem{thm}{Theorem}[section]
\newtheorem{lemma}[thm]{Lemma}
\theoremstyle{definition}
\newtheorem{remark}[thm]{Remark}
\def\Xint#1{\mathchoice
      {\XXint\displaystyle\textstyle{#1}}%
      {\XXint\textstyle\scriptstyle{#1}}%
      {\XXint\scriptstyle\scriptscriptstyle{#1}}%
      {\XXint\scriptscriptstyle\scriptscriptstyle{#1}}%
                   \!\int}
\def\XXint#1#2#3{{\setbox0=\hbox{$#1{#2#3}{\int}$}
         \vcenter{\hbox{$#2#3$}}\kern-.5\wd0}}
\def\dashint{\Xint-}
\def\R{\mathbb{R}}
\def\e{\varepsilon}
\numberwithin{equation}{section}
\begin{document}

\title{Convergence Rates in Parabolic Homogenization \\
with Time-Dependent Periodic Coefficients}

\author{ Jun Geng\thanks{Supported in part by the NNSF of China (11571152) and Fundamental Research Funds for the Central
Universities (LZUJBKY-2015-72).}\qquad Zhongwei Shen\thanks{Supported in part by NSF grant DMS-1161154.}}
\date{}
\maketitle

\begin{abstract}

For a family of second-order parabolic systems with bounded measurable,
 rapidly oscillating and time-dependent periodic coefficients,
 we investigate the sharp  convergence rates  of weak solutions in $L^2$.
 Both initial-Dirichlet and initial-Neumann  problems are studied.

\end{abstract}

\section{\bf Introduction}
The primary purpose of this paper is to investigate the sharp convergence rates in $L^2$ 
for a family of second-order parabolic operators $\partial_t+\mathcal{L}_\varepsilon$ with bounded measurable, 
rapidly oscillating and time-dependent periodic coefficients.
Both the initial-Dirichlet and initial-Neumann boundary value problems are studied.
Specifically, we consider
\begin{equation}\label{elliptic operator}
\mathcal{L}_\varepsilon=-\text{div}\left(A\big({x}/{\varepsilon},{t}/{\varepsilon^2}\big)\nabla
\right),
\end{equation}
where $\e>0$ and $A(y,s)= \big(a_{ij}^{\alpha\beta} (y,s)\big)$
with $1\le i, j\le d$ and $1\le \alpha, \beta \le m$.
Throughout this paper we will assume that the coefficient matrix
$A(y,s)$ is real, bounded measurable, and satisfies the ellipticity condition,
\begin{equation}\label{ellipticity}
\mu |\xi|^2\le  a^{\alpha\beta}_{ij}(y,s)\xi_i^\alpha
\xi^\beta_j\leqslant \frac{1}{\mu}|\xi|^2
\quad \text{ for any }
\xi=(\xi_i^\alpha ) \in \mathbb{R}^{m\times d} \text{ and  a.e. }  (y,s)\in \mathbb{R}^{d+1},
\end{equation}
where $\mu >0$,
and the periodicity condition,
\begin{equation}\label{periodicity}
A(y+z,s+t)=A(y,s)~~~\text{ for }(z,t)\in \mathbb{Z}^{d+1}\text{ and a.e. }(y,s)\in \mathbb{R}^{d+1}.
\end{equation}
No additional smoothness condition will be imposed on $A$.

Let $\Omega\subset\mathbb{R}^d$ be a bounded   domain and 
$0<T<\infty$.
We are interested in the initial-Dirichlet problem,
\begin{equation}\label{IDP}
\left\{\aligned
(\partial_t +\mathcal{L}_\e) u_\e  &=  F &\quad &\text { in } \Omega \times (0, T),\\
u_\e & = g & \quad & \text{ on } \partial\Omega \times (0, T),\\
u_\e  &=h &\quad &\text{ on } \Omega \times \{ t=0\},
\endaligned
\right.
\end{equation}
and the initial-Neumann problem, 
\begin{equation}\label{INP}
\left\{\aligned
(\partial_t +\mathcal{L}_\e) u_\e  &=  F &\quad &\text { in } \Omega \times (0, T),\\
\frac{\partial u_\e}{\partial \nu_\e} & = g & \quad & \text{ on } \partial\Omega \times (0, T),\\
u_\e  &=h &\quad &\text{ on } \Omega \times \{ t=0\},
\endaligned
\right.
\end{equation}
where $\displaystyle \left(\frac{\partial u_\e}{\partial \nu_\e}\right)^\alpha
=n_i a_{ij}^{\alpha\beta} (x/\e, t/\e^2)\frac{\partial u_\e^\beta}{\partial x_j}$ denotes the conormal derivative 
of $u_\e$ associated with $\mathcal{L}_\e$ and $n=(n_1, \dots, n_d)$ is the outward  normal to $\partial\Omega$.
Under suitable conditions on $F$, $g$, $h$ and $\Omega$, it is known that the weak solution $u_\e$ of
(\ref{IDP}) converges weakly in $L^2(0, T; H^1(\Omega))$ and
strongly in $L^2(\Omega_T)$ to $u_0$, where $\Omega_T =\Omega\times (0, T)$.
 Furthermore, the function $u_0$ is the weak solution of
the (homogenized) initial-Dirichlet problem,
\begin{equation}\label{IDP-0}
\left\{\aligned
(\partial_t +\mathcal{L}_0) u_0 &=  F &\quad &\text { in } \Omega \times (0, T),\\
u_0 & = g & \quad & \text{ on } \partial\Omega \times (0, T),\\
u_0  &=h &\quad &\text{ on } \Omega \times \{ t=0\}.
\endaligned
\right.
\end{equation}
Similarly, the weak solution $u_\e$ of
(\ref{INP}) converges weakly in $L^2(0, T; H^1(\Omega))$ and
strongly in $L^2(\Omega_T)$ to the weak solution of the (homogenized) initial-Neumann problem,
\begin{equation}\label{INP-0}
\left\{\aligned
(\partial_t +\mathcal{L}_0) u_0  &=  F &\quad &\text { in } \Omega \times (0, T),\\
\frac{\partial u_0}{\partial \nu_0} & = g & \quad & \text{ on } \partial\Omega \times (0, T),\\
u_0  &=h &\quad &\text{ on } \Omega \times \{ t=0\}.
\endaligned
\right.
\end{equation}
The operator $\mathcal{L}_0$ in (\ref{IDP-0}) and (\ref{INP-0}),
called the homogenized operator, 
is a second-order elliptic operator with constant coefficients \cite{bensoussan-1978}.

The following are the main results of the paper,
which establish  the sharp  $O(\e)$ convergence rates in $L^2(\Omega_T)$ for
both the initial-Dirichlet and the initial-Neumann problems.

\begin{thm}\label{main-theorem-1}
Suppose that the coefficient matrix $A$ satisfies (\ref{ellipticity}) and (\ref{periodicity}).
Let $\Omega$ be a bounded $C^{1,1}$ domain in $\R^d$.
 Let $u_\e, u_0\in L^2(0, T; H^1(\Omega))$ be   weak solutions of
 (\ref{IDP}) and (\ref{IDP-0}), respectively,   for some $F\in L^2(\Omega_T)$.
 Assume that $u_0\in L^2(0, T; H^2(\Omega))$.
 Then 
 \begin{equation}\label{main-estimate-1}
 \aligned
& \|   u_\e -u_0\|_{L^2(\Omega_T)}\\
& \le C \e \left\{ \|  u_0\|_{L^2(0, T; H^2(\Omega))}
 +\| F\|_{L^2(\Omega_T)}
 +\sup_{\e^2 <t<T} \left(\frac{1}{\e} \int_{t-\e^2}^t \int_\Omega |\nabla u_0|^2\right)^{1/2} \right\},
 \endaligned
 \end{equation}
 where $C$ depends at most on $d$, $m$, $\mu$,  $T$ and $\Omega$.
 \end{thm}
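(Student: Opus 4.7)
The plan is to carry out a two-scale expansion of $u_\e$ around $u_0$ and then pass to $L^2(\Omega_T)$ through a duality argument against the adjoint parabolic problem. Let $\chi = (\chi^\beta_j(y,s))$ denote the first-order parabolic correctors, defined as the periodic, mean-zero solutions of $(\partial_s + \mathcal{L}_1)(\chi^\beta_j + y_j e^\beta) = 0$ on the space-time unit cell, and let $\mathfrak{B} = (\mathfrak{B}^{\alpha\beta}_{ij})$ be the associated parabolic flux correctors, which exhibit $\widehat{a}^{\alpha\beta}_{ij} - a^{\alpha\beta}_{ij} - a^{\alpha\gamma}_{ik}\partial_{y_k}\chi^{\gamma\beta}_j$ as a space-time divergence of a bounded periodic tensor. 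Choose a parabolic smoothing operator $S_\e$ (convolution against a mollifier of scale $\e$ in space and $\e^2$ in time), and a spatial cut-off $\eta_\e \in C_0^\infty(\Omega)$ with $\eta_\e \equiv 1$ on $\{x \in \Omega : \text{dist}(x, \partial\Omega) > 3\e\}$ and $|\nabla\eta_\e| \leq C\e^{-1}$. The central auxiliary quantity is
\begin{equation*}
w_\e(x,t) = u_\e(x,t) - u_0(x,t) - \e\, \chi^\beta_j(x/\e, t/\e^2)\, \eta_\e(x)\, S_\e(\partial_j u_0^\beta)(x,t).
\end{equation*}

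First I would compute $(\partial_t + \mathcal{L}_\e)w_\e$ and, using the cell-problem identity together with the flux correctors, rewrite it in the form $\text{div}(\e\, G_\e) + R_\e$, where the coefficients of $G_\e$ and $R_\e$ are supported either in the boundary strip $\{x : \text{dist}(x,\partial\Omega) \leq 3\e\}$, in the initial layer $\{t < \e^2\}$, or carry an extra factor of $\e$ through the smoothing-error bound $\|S_\e\phi - \phi\|_{L^2} \leq C\e\|\nabla\phi\|_{L^2}$. To bound $\|u_\e - u_0\|_{L^2(\Omega_T)}$ I would then invoke duality: for arbitrary $\Phi \in L^2(\Omega_T)^m$ let $\phi_\e$ solve the adjoint initial-Dirichlet problem
\begin{equation*}
(-\partial_t + \mathcal{L}_\e^*)\phi_\e = \Phi \text{ in } \Omega_T, \quad \phi_\e = 0 \text{ on } \partial\Omega\times(0,T), \quad \phi_\e(\cdot,T) = 0,
\end{equation*}
and let $\phi_0$ be the corresponding homogenized solution. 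Because $\Omega$ is $C^{1,1}$, one has $\|\partial_t\phi_0\|_{L^2(\Omega_T)} + \|\phi_0\|_{L^2(0,T;H^2(\Omega))} \leq C\|\Phi\|_{L^2(\Omega_T)}$, which is the crucial input on the adjoint side. Pairing $\Phi$ with $u_\e - u_0$, integrating by parts, and decomposing $u_\e - u_0 = w_\e + \e\, \chi^\beta_j(x/\e,t/\e^2)\eta_\e\, S_\e(\partial_j u_0^\beta)$ reduces the problem to estimating each error contribution tested against $\phi_\e$ (or, after the analogous two-scale expansion applied to $\phi_\e - \phi_0$, against $\phi_0$ and $\nabla\phi_0$).

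The hardest part will be handling the spatial boundary layer and the initial time layer simultaneously. On the boundary strip, $\nabla\eta_\e$ is of order $\e^{-1}$ on a set of volume $O(\e)$, so naive energy estimates yield only $O(\e^{1/2})$; the sharp $O(\e)$ rate relies on the trace-type inequality
\begin{equation*}
\int_0^T\!\!\int_{\{\text{dist}(x,\partial\Omega)<\e\}} |\nabla u_0|^2 \, dx\, dt \leq C\e\, \|u_0\|_{L^2(0,T;H^2(\Omega))}^2,
\end{equation*}
valid on $C^{1,1}$ domains, together with the analogous bound for $\nabla\phi_0$. The initial-layer contribution arises because $S_\e$ is not defined for $t < \e^2$: truncating the corrector term smoothly on $\{t < \e^2\}$ produces an unavoidable residue whose $L^2$ size is captured precisely by the averaged quantity $\sup_{\e^2 < t < T}\bigl(\e^{-1}\int_{t-\e^2}^{t}\!\int_\Omega|\nabla u_0|^2\bigr)^{1/2}$, the third term on the right-hand side of (\ref{main-estimate-1}). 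Taking the supremum over $\Phi$ with $\|\Phi\|_{L^2(\Omega_T)} \leq 1$ then yields (\ref{main-estimate-1}).
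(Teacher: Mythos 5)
Your overall strategy (parabolic correctors, space--time flux correctors, $\e$-smoothing, boundary and initial-layer cutoffs, duality against the adjoint problem with $H^2$ and boundary-strip estimates for the homogenized adjoint solution) is the same as the paper's. But there is a genuine gap in your two-scale expansion: your $w_\e$ contains only the $O(\e)$ corrector term $\e\,\chi^\e\eta_\e S_\e(\nabla u_0)$ and omits the second-order term $\e^2\,\phi^\e_{(d+1)ij}\,\partial_{x_i}K_\e(\partial_{x_j}u_0)$ built from the \emph{time} components of the dual corrector. This term is not optional here. Because $\sum_{i=1}^d\partial_{y_i}b_{ij}=\partial_s\chi_j\neq 0$, the flux tensor $B$ is divergence-free only in the full $(d+1)$ variables, with $b_{(d+1)j}=-\chi_j$; writing $b_{ij}=\partial_{y_k}\phi_{kij}$ with $k$ running to $d+1$ and unscaling, the computation of $(\partial_t+\mathcal{L}_\e)w_\e$ produces the pure time-derivative term $\e^2\partial_t\{\phi^\e_{(d+1)ij}\partial_{x_i}K_\e(\partial_{x_j}u_0)\}$. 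This term is \emph{not} of the form $\mathrm{div}(\e G_\e)$ plus a small or layer-supported remainder: since $\partial_t(\phi^\e)=\e^{-2}(\partial_s\phi)^\e$, it is $O(1)$ in the bulk, and being a time derivative it cannot be integrated by parts against a test function $\psi\in L^2(0,T;H^1_0(\Omega))$, which has no time regularity. The only way to kill it is to absorb it into $(\partial_t+\mathcal{L}_\e)$ of the extra $\e^2$ term in $w_\e$, which is exactly what the paper does (see (\ref{w}) and Theorem \ref{Theorem-2.1}); the price, the additional $\mathcal{L}_\e$ applied to that $\e^2$ term, is harmless. Without this correction your claimed decomposition $\mathrm{div}(\e G_\e)+R_\e$ fails, and so does everything downstream.

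A secondary point: you assert the flux corrector tensor is \emph{bounded}. Under merely bounded measurable coefficients one only gets $\phi_{kij}\in H^1(Y)$ (it is built from $\nabla f$ with $\Delta_{d+1}f=b\in L^2(Y)$), and boundedness would require regularity of $A$ that the theorem does not assume. This is repairable -- the paper replaces sup-bounds by the periodic-weight inequality $\|g^\e S_\e(f)\|_{L^2}\le C\|g\|_{L^2(Y)}\|f\|_{L^2}$ of Lemma \ref{lemma-S-3} and its derivative variants -- but your argument should be routed through such estimates rather than through $L^\infty$ bounds on $\phi$.
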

 
 \begin{thm}\label{main-theorem-2}
 Let $u_\e\in L^2(0, T; H^1(\Omega))$ be  a weak solution of
 (\ref{INP})  for some $F\in L^2(\Omega_T)$
 and $u_0\in L^2(0, T; H^1(\Omega))$ the weak solution of the  homogenized problem (\ref{INP-0}).
 Under the same assumptions as in Theorem \ref{main-theorem-1},
 the estimate (\ref{main-estimate-1}) holds.
 \end{thm}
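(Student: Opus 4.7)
The proof follows the same blueprint as that of Theorem \ref{main-theorem-1}, the essential changes being (i) the treatment of the conormal boundary condition in the energy identity and (ii) the choice of auxiliary problem in the subsequent duality step. The starting point is the standard two-scale ansatz. Let $\chi=(\chi_j^\beta)$ denote the periodic correctors solving the parabolic cell problem on $\mathbb{T}^{d+1}$, and let $\phi$ be the associated flux (``dual'') corrector giving a skew-symmetric representation of $\widehat{A}-A-A\nabla_y\chi$. Since $A$ is only bounded measurable, $\chi$ and $\phi$ carry no pointwise regularity, so the plan is to insert a space-time smoothing operator $S_\e$ together with a cutoff $\eta_\e$ vanishing in a $C\e$-neighborhood of $\partial\Omega$, and set
\begin{equation*}
w_\e(x,t) := u_\e(x,t)-u_0(x,t)-\e\,\chi(x/\e,t/\e^2)\,S_\e(\eta_\e \nabla u_0)(x,t).
\end{equation*}

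Next, I would compute $(\partial_t+\mathcal{L}_\e)w_\e$. Using the cell and flux-corrector equations, the right-hand side reorganizes into $\partial_t$ of an $O(\e)$ quantity plus $\text{div}$ of $O(\e)$ quantities, modulo errors generated by the commutator $[\nabla,S_\e]$, the strip where $\eta_\e\ne 1$, and the mollification defect $(I-S_\e)(\eta_\e \nabla u_0)$. A parabolic energy estimate in $L^\infty(0,T;L^2(\Omega))\cap L^2(0,T;H^1(\Omega))$ then yields
\begin{equation*}
\|w_\e\|_{L^2(0,T;H^1(\Omega))} \le C\e\,\bigl\{\text{RHS of }\eqref{main-estimate-1}\bigr\},
\end{equation*}
with the supremum term in \eqref{main-estimate-1} arising from boundary-layer integrals over strips of spatial width $C\e$ and time intervals of length $\e^2$. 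The Neumann-specific step is to verify that, in the region $\{\eta_\e\equiv 0\}$ adjacent to $\partial\Omega$, the difference $\partial u_\e/\partial\nu_\e-\partial u_0/\partial\nu_\e$ can be rewritten, using the flux corrector $\phi$ along $\partial\Omega$, as the tangential divergence of an $O(\e)$ quantity, so that the resulting boundary integrals in the energy identity are absorbed rather than cancelled by a trace-matching corrector as in the Dirichlet case.

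To pass from this $H^1$-type bound on $w_\e$ to the sharp $L^2$ estimate \eqref{main-estimate-1}, I would invoke a duality argument: given $\psi\in L^2(\Omega_T)$, solve the backward adjoint initial-Neumann problem with right-hand side $\psi$, zero lateral Neumann data, and zero terminal data, producing $v_\e$ and its homogenized limit $v_0$; pairing with $u_\e-u_0$ and inserting the analogous two-scale ansatz for $v_\e$ reduces the $L^2$ bound to a bilinear pairing between $O(\e)$ errors produced by the two energy-level estimates. The $C^{1,1}$ assumption on $\partial\Omega$ enters here through the $H^2$ estimate $\|v_0\|_{L^2(0,T;H^2(\Omega))}\le C\|\psi\|_{L^2(\Omega_T)}$ for the constant-coefficient adjoint Neumann problem.

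The main obstacle is the bookkeeping of boundary contributions in the Neumann energy identity: one must design the cutoff $\eta_\e$ so that the conormal flux of the ansatz matches that of $u_\e$ up to a controllable $O(\e)$ error in a suitable dual norm, while simultaneously retaining $\eta_\e\equiv 1$ on most of $\Omega$ so that the flux-corrector identity is activated in the interior. Balancing these two requirements, together with the fact that $\nabla u_0$ has no better than $L^2$ integrability near $\partial\Omega$, is the crux of the argument and explains the appearance of the boundary-layer supremum term in \eqref{main-estimate-1}.
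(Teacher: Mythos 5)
Your overall blueprint (two-scale ansatz with smoothing and cutoff, energy estimate, then duality through the backward adjoint Neumann problem with the $C^{1,1}$-based $H^2$ estimate for $v_0$) is the paper's strategy, but there are three concrete problems. First, your displayed ansatz for $w_\e$ omits the term $-\e^2\phi_{(d+1)ij}^\e\,\partial_{x_i}K_\e(\partial_{x_j}u_0)$ that appears in \eqref{w}. In the time-dependent setting the divergence-free extension of $B=A+A\nabla\chi-\widehat A$ has a time slot $b_{(d+1)j}=-\chi_j$, and the skew-symmetric potential $\phi_{kij}$ carries indices up to $d+1$; the extra $\e^2\phi_{(d+1)ij}$ term in the ansatz is exactly what absorbs $-\e\chi_j^\e\,\partial_t K_\e(\partial_{x_j}u_0)$ via $\e^2\partial_t\{\phi_{(d+1)ij}^\e\cdots\}$ in \eqref{L-w}. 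Without it you are left with a term of size $O(1)$ (since $\partial_t$ of an $\e$-oscillating profile costs $\e^{-2}$) that cannot be integrated by parts against a test function $\psi$ that is merely in $L^2(0,T;H^1(\Omega))$. This term is precisely what distinguishes the parabolic time-dependent case from the elliptic one, and merely invoking ``the flux corrector giving a skew-symmetric representation of $\widehat A-A-A\nabla_y\chi$'' does not supply it. Second, your claimed energy bound $\|w_\e\|_{L^2(0,T;H^1(\Omega))}\le C\e\{\cdots\}$ is false; the boundary-layer terms $\e^{1/2}\|\nabla\psi\|_{L^2(\Omega_{T,3\delta})}$ in Lemma \ref{main-lemma-3.2} only give $O(\sqrt\e)$ when $\psi=w_\e$ (Theorem \ref{INP-H-1}). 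If the $H^1$ rate were really $O(\e)$, the $L^2$ estimate would follow trivially and your subsequent duality step would be superfluous; the whole point of the duality argument is to upgrade $O(\sqrt\e)$ in $H^1$ to $O(\e)$ in $L^2$.

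Third, your ``Neumann-specific step'' misidentifies where the difficulty lies. Since $u_\e$ and $u_0$ carry the \emph{same} Neumann data $g$, the weak formulations of \eqref{INP} and \eqref{INP-0} subtract to give the identity in the proof of Theorem \ref{Theorem-2.2} for all $\psi\in L^2(0,T;H^1(\Omega))$ with no boundary integrals at all, and because $K_\e(\nabla u_0)\in C_0^\infty(\Omega_T)$ the corrector terms are compactly supported, so integrating by parts on them produces no boundary contributions either. There is no conormal flux mismatch to rewrite as a tangential divergence; the quantity $\partial u_\e/\partial\nu_\e-\partial u_0/\partial\nu_\e=n_i(\widehat a_{ij}-a^\e_{ij})\partial_j u_0$ is $O(1)$ on $\partial\Omega$ and your assertion that it is the tangential divergence of an $O(\e)$ quantity is neither justified nor needed. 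The only genuine changes from the Dirichlet case are that the test space is $H^1(\Omega)$ rather than $H^1_0(\Omega)$ (Lemma \ref{main-lemma-3.2}) and that the dual problem \eqref{INP-dual} carries homogeneous conormal data; once these are in place the argument is verbatim that of Theorem \ref{main-theorem-1}.
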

 
 \begin{remark}\label{remark-1.1}
 {\rm 
In Theorems \ref{main-theorem-1} and \ref{main-theorem-2}
 we do not specify the conditions directly on $g$ and $h$, but rather require 
 $u_0 \in L^2(0, T; H^2(\Omega))$.
 In the case that either
 $u_\e=u_0=0$ or $\displaystyle \frac{\partial u_\e}{\partial \nu_\e}=\frac{\partial u_0}{\partial \nu_0}=0$
 on $\partial\Omega \times (0, T)$, i.e. $g=0$,
 the third term in the r.h.s. of (\ref{main-estimate-1}) may be bounded
 by 
 $$
 C\big\{ \|\partial_t u_0\|_{L^2(\Omega_T)} + \| F\|_{L^2(\Omega_T)} +\| h\|_{L^2(\Omega)}\big\}.
 $$
See (\ref{3.10-2}).
 As a result, we obtain 
 \begin{equation}\label{main-estimate-2}
 \| u_\e -u_0\|_{L^2(\Omega_T)}
 \le C\, \e \Big\{ \| u_0\|_{L^2(0, T; H^2(\Omega))} +\| F\|_{L^2(\Omega_T)} +\| h\|_{L^2(\Omega)} \Big\},
 \end{equation}
 where $C$ depends at most on $d$, $m$, $\mu$, $T$ and $\Omega$. In particular, if
 $g=0$ and $h=0$, then 
 $$
 \| u_0\|_{L^2(0, T; H^2(\Omega))} \le C \| F\|_{L^2(\Omega_T)}
 $$
  (see (\ref{3.10-5})). It follows that 
\begin{equation}\label{main-estimate-3}
\| u_\e -u_0\|_{L^2(\Omega_T)}
 \le C\, \e \| F\|_{L^2(\Omega_T)}.
 \end{equation}
 Also, in the case that $g=0$ on $\partial\Omega\times (0, T)$ and $h\in H^1(\Omega)$,
 it is known that if $\mathcal{L}_0^* =\mathcal{L}_0$, then
 $$
 \| u_0\|_{L^2(0, T; H^2(\Omega))} 
 \le C \Big\{ \| F\|_{L^2(\Omega_T)} +\| h\|_{H^1(\Omega)} \Big\}
 $$
 \cite{Lady}. This gives
  \begin{equation}\label{main-estimate-4}
 \| u_\e -u_0\|_{L^2(\Omega_T)}
 \le C\, \e \Big\{ \| F\|_{L^2(\Omega_T)} +\| h\|_{H^1(\Omega)} \Big\},
 \end{equation}
 where $C$ depends at most on $d$, $m$, $\mu$,  $T$ and $\Omega$.
 }
 \end{remark}
 
 The sharp convergence rate is one of the central issues  in quantitative homogenization 
 and has been studied extensively in the various  settings.
 For elliptic equations and systems
 in divergence form with periodic coefficients,
 related results may be found in the recent work  \cite{
 Suslina-2012, Suslina-2013, KLS2, KLS3, KLS4, Shen-Boundary-2015, Gu-2015, SZ-2015}
 (also see  \cite{bensoussan-1978, Jikov-1994, Griso-2004, Griso-2006, Onofrei-2007}
 for references on earlier work).
 In particular, the order sharp estimate 
 \begin{equation}\label{elliptic-estimate}
 \| u_\e -u_0\|_{L^2(\Omega)} \le C\,\e \| F\|_{L^2(\Omega)},
 \end{equation}
 holds, if $\mathcal{L}_\e (u_\e)=\mathcal{L}_0 (u_0)=F$ in $\Omega$ and
 $u_\e =u_0=0$ or $\frac{\partial u_\e}{\partial\nu_\e}=\frac{\partial u_0}{\partial \nu_0}=0$ on $\partial\Omega$
 (see \cite{Suslina-2012, Suslina-2013, Gu-2015, SZ-2015} for $C^{1,1}$ domains
 and \cite{KLS2, KLS4, Shen-Boundary-2015} for Lipschitz domains).
 For parabolic equations and systems various  results are known in the case where 
 the coefficients are time-independent \cite{Jikov-1994, Suslina-2004, Zhikov-2006,
 Suslina-2015}. 
 We note that in this case, using the partial Fourier transform in the $t$ variable,
  it is possible to represent the solution of the parabolic system
  as an integral of the resolvent of the elliptic operator $\mathcal{L}_\e$
  and apply the elliptic estimates.
 
 Very few results are known if the coefficients are
  time-dependent.
In fact,  to the authors' best knowledge, 
 the only known estimate in this case is
 \begin{equation}\label{parabolic-max}
 \| u_\e -u_0\|_{L^\infty(\Omega_T)} \le C \e,
 \end{equation}
 obtained by the use of the maximum principle, 
 where $C$ depends on $u_0$ and coefficients are assumed to be smooth \cite{bensoussan-1978}.
 Our  order sharp estimates (\ref{main-estimate-2})-(\ref{main-estimate-4}), which extend (\ref{elliptic-estimate}) to the parabolic setting,
 seem to be the first work in this area beyond the rough estimate (\ref{parabolic-max}).
 
 We now describe some of key ideas in the proof of Theorems \ref{main-theorem-1} and \ref{main-theorem-2}.
 Although it is not clear how  to reduce parabolic systems with time-dependent coefficients to elliptic systems
 by some simple transformations, 
 our general approach to the estimate (\ref{main-estimate-1})
  is inspired by the work on elliptic systems
 mentioned above.
 We consider the function
 \begin{equation}\label{w-1}
 w_\e =u_\e (x, t)-u_0 (x, t)
 -\e \chi (x/\e, t/\e^2)  K_\e (\nabla u_0) 
 -\e^2  \phi (x/\e, t/\e^2) \nabla K_\e (\nabla u_0),
 \end{equation}
 where $\chi (y, s)$ and $\phi(y,s)$ are correctors and dual correctors for
 the family of operators $\partial_t +\mathcal{L}_\e$, $\e>0$ (see Section 2 for their definitions).
 In (\ref{w-1}) the operator $K_\e: L^2(\Omega_T) \to C_0^\infty(\Omega_T)$ is a parabolic smoothing
 operator at scale $\e$.
 We note that in the elliptic case \cite{Suslina-2012,Suslina-2013,Shen-Boundary-2015,SZ-2015},
 only the first three terms in the r.h.s. of (\ref{w-1}) are used.
 By computing $ (\partial_t +\mathcal{L}_\e) w_\e$, we are able to show that
 \begin{equation}\label{1.1-dual}
 \aligned
  &\Big|\int_0^T \langle \partial_t w_\e, \psi\rangle +\iint_{\Omega_T}
 A^\e\nabla w_\e \cdot \nabla \psi \Big|\\
 & \le C \Big\{ \| u_0\|_{L^2(0, T; H^2(\Omega))}
 +\|\partial_t u_0\|_{L^2(\Omega_T)}
 +\e^{-1/2}\| \nabla u_0\|_{L^2(\Omega_{T, \e})}  \Big\}\\
& \qquad \qquad \qquad \qquad 
\cdot \Big\{ \e \| \nabla \psi\|_{L^2(\Omega_T)} +\e^{1/2}
 \|\nabla \psi\|_{L^2(\Omega_{T, \e})} \Big\}
 \endaligned
 \end{equation}
 for any $\psi \in L^2(0, T; H^1_0(\Omega))$ in the case of Dirichlet  condition (\ref{IDP}),
 and for any $\psi \in L^2(0, T; H^1(\Omega))$ in the case of the Neumann  condition (\ref{INP}),
 where $\Omega_{T, \e}$ denotes the set of points in  $\Omega_T$ whose (parabolic) distances
 to the boundary of $\Omega_T$ are less than $\e$ 
 (see Section 3 for details).
 By taking $\psi =w_\e$ in (\ref{1.1-dual}) we obtain an $O(\sqrt{\e})$ error estimate 
 in $L^2(0, T; H^1(\Omega))$, 
 \begin{equation}\label{H-1}
 \| \nabla w_\e\|_{L^2(\Omega_T)}
 \le C \sqrt{\e}
  \Big\{ \| u_0\|_{L^2(0, T; H^2(\Omega))}
 +\|\partial_t u_0\|_{L^2(\Omega_T)}
 +\e^{-1/2} \| \nabla u_0\|_{L^2(\Omega_{T, \e})}  \Big\},
 \end{equation}
 which is more or less sharp,
  for both the initial-Dirichlet and the initial-Neumann problems.
  Finally, with (\ref{1.1-dual}) at our disposal,
  we give the proof of Theorems \ref{main-theorem-1} and \ref{main-theorem-2} in Section 4.
  This is done by a dual argument, inspired by \cite{Suslina-2012, Suslina-2013}.
 
We point out that results on convergence rates 
are useful in the study of regularity estimates that are uniform in $\e>0$
\cite{Armstrong-Smart-2014, Armstrong-Shen-2016, Shen-Boundary-2015}.
For solutions of $(\partial_t +\mathcal{L}_\e)u_\e=F$, the uniform boundary H\"older
and interior Lipschitz estimates were proved in  \cite{Geng-Shen-2015} by a
compactness method, introduced to the study of homogenization problems in \cite{AL-1987}.
The results obtained in this paper should allow us to establish the boundary Lipschitz estimates
as well as Rellich estimates at large scale for parabolic systems
 in a manner similar to that in \cite{Shen-Boundary-2015} for elliptic systems of linear elasticity. 
We plan to carry this out in a separate study.

We end this section with some notations that will be used throughout the paper.
A function $h=h(y,s)$ in $\R^{d+1}$
 is said to be $1$-periodic if $h$ is periodic with respect to $\mathbb{Z}^{d+1}$. 
 We will use the notation
 $$
 h^\e (x,t)= h (x/\e, t/\e^2)
 $$
 for $\e>0$, and the summation convention that the repeated indices are summed.
 Finally, we use $C$ to denote constants that depend at most on $d$, $m$, $\mu$,
 $T$ and $\Omega$, but never on $\e$.
 
%%%%%%%%%%%%%%%%%%%%%%%%%%%%%%%%%%%%%%%%%%%%%%%%%%

%%%%%%%%%%%%%%%%%%%%%%%%%%%%%%

\section{\bf Correctors and dual correctors}

 Let $\mathcal{L}_\varepsilon=-\text{div}\left(A^\e(x,t) \nabla\right)$, where $A^\e (x,t)=A(x/\e, t/\e^2)$
 and $A(y, s)$ is 1-periodic and satisfies   the ellipticity condition (\ref{ellipticity}).
 For $1\leq j\leq d$ and $1\le \beta\le m$, 
 the corrector $\chi_j^\beta=\chi_j^\beta (y,s)=(\chi_{j}^{\alpha\beta} (y, s))$ 
  is defined as the weak solution of the following cell problem:
\begin{equation}\label{corrector}
\begin{cases}
\big(\partial_s +\mathcal{L}_1\big) (\chi_j^\beta)
=-\mathcal{L}_1(P_j^\beta ) ~~~\text{in}~~Y, \\
\chi_j^\beta =\chi^\beta_j(y,s)~~ \text{is } \text{1-periodic in } (y,s),\\
\int_{Y} \chi_j^\beta = 0,
\end{cases}
\end{equation}
where $Y=[0,1)^{d+1}$, $P_j^\beta  (y)=y_j e^\beta$, and
$e^\beta=(0, \dots, 1, \dots, 0 )$ with $1$ in the $\beta^{th}$ position. 
Note that 
\begin{equation}
(\partial_s+\mathcal{L}_1)(\chi_j^\beta +P_j^\beta )=0~~~\text{in}~~\mathbb{R}^{d+1}.
\end{equation}
By the rescaling property of $\partial_t +\mathcal{L}_\e$, 
one obtains that
\begin{equation}
(\partial_t+\mathcal{L}_\varepsilon)\left\{\varepsilon\chi_j^\beta(x/\varepsilon,t/\varepsilon^2)+P^\beta_j(x)\right\}
=0~~~\text{in}~~\mathbb{R}^{d+1}.
\end{equation}

Let $\widehat{A}=(\widehat{a}^{\alpha\beta}_{ij})$, where $1\leq i,j\leq d$, $1\le \alpha, \beta\le m$,
and
\begin{align}\label{A}
\widehat{a}^{\alpha\beta}_{ij}=\dashint_{Y}\left[a^{\alpha\beta}_{ij}+a^{\alpha\gamma}
_{i k}\frac{\partial}{\partial y_k}\chi^{\gamma\beta}_{j}\right];
\end{align}
that is
$$
\widehat{A}=\dashint_Y \Big\{ A +A\nabla \chi \Big\}.
$$
It is known that the constant matrix  $\widehat{A}$ satisfies the ellipticity condition,
$$
\mu |\xi|^2 \le \widehat{a}_{ij}^{\alpha\beta} \xi_i^\alpha \xi_j^\beta
\le \mu_1 |\xi|^2 \qquad \text{ for any } \xi=(\xi_j^\beta) \in \R^{m\times d},
$$
where $\mu_1>0$ depends only on $d$, $m$ and $\mu$ \cite{bensoussan-1978}.
Denote $\mathcal{L}_0=-\text{div}(\widehat{A}\nabla)$. Then $\partial_t+\mathcal{L}_0$ is the homogenized operator 
for the family of parabolic operators  $\partial_t+\mathcal{L}_\varepsilon$, $\e>0$.

To introduce the dual correctors, we consider the 1-periodic matrix-valued function
\begin{equation}\label{B}
B= A + A\nabla \chi -\widehat{A}.
\end{equation}
More precisely,  $B=B(y, s)= \big( b_{ij}^{\alpha\beta}\big)$, where
$1\le i, j\le d$, $1\le \alpha, \beta\le m$, and
\begin{equation}\label{b}
b_{ij}^{\alpha\beta}=a_{ij}^{\alpha\beta}+a_{ik}^{\alpha\gamma}
\frac{\partial \chi^{\gamma\beta}_j}{\partial y_k}-\widehat{a}^{\alpha\beta}_{ij}.
\end{equation}

\begin{lemma}\label{1.15}
Let $1\leq j\leq d$ and $1\le \alpha, \beta\le m$.
Then there exist 1-periodic functions 
$\phi_{kij}^{\alpha\beta}(y,s)$ in $\R^{d+1}$ such that 
$\phi_{kij}^{\alpha\beta}\in H^1(Y)$,
\begin{equation}\label{1.10}
b_{ij}^{\alpha\beta}=\frac{\partial}{\partial y_k}(\phi^{\alpha\beta}_{kij})~~\text{ and }~~\phi^{\alpha\beta}_{kij}
=-\phi_{ikj}^{\alpha\beta},
\end{equation}
where $1\le k, i\le d+1$, $b_{ij}^{\alpha\beta}$ is defined by
(\ref{b}) for $1\le i\le d$,  $b_{(d+1)j}^{\alpha\beta}=-\chi_j^{\alpha\beta}$,
and we have used the notation $y_{d+1}=s$.
\end{lemma}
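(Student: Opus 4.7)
The plan is to reduce the statement to a standard Helmholtz--type construction on the $(d+1)$-dimensional flat torus, after verifying that the extended vector
$\mathbf{b}_j^{\alpha\beta} := (b_{1j}^{\alpha\beta},\dots, b_{dj}^{\alpha\beta}, -\chi_j^{\alpha\beta})$
has the two properties one needs: mean zero and divergence free on $Y=[0,1)^{d+1}$. The role of the additional component $b_{(d+1)j}^{\alpha\beta}=-\chi_j^{\alpha\beta}$ is precisely to absorb the time derivative $\partial_s\chi$ produced by the corrector equation, which is why the construction must be carried out in $(y,s)$-space rather than only in $y$-space as in the elliptic case.

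First I would record the two structural facts. The mean-zero property $\int_Y b_{ij}^{\alpha\beta}=0$ for $1\le i\le d$ is the definition (\ref{A}) of $\widehat{A}$, while for $i=d+1$ it is the normalization $\int_Y\chi_j^\beta=0$ from (\ref{corrector}). For the divergence-free property, the plan is to apply $\partial_{y_i}$ to (\ref{b}) and sum over $1\le i\le d$: the constant matrix $\widehat{A}$ drops out, and the corrector equation $(\partial_s+\mathcal{L}_1)(\chi_j^\beta+P_j^\beta)=0$ rewrites
\begin{equation*}
\sum_{i=1}^d \partial_{y_i}\!\left(a_{ij}^{\alpha\beta}+a_{ik}^{\alpha\gamma}\partial_{y_k}\chi_j^{\gamma\beta}\right)=\partial_s\chi_j^{\alpha\beta},
\end{equation*}
which is cancelled exactly by $\partial_{y_{d+1}} b_{(d+1)j}^{\alpha\beta}=-\partial_s\chi_j^{\alpha\beta}$. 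Hence $\sum_{i=1}^{d+1}\partial_{y_i} b_{ij}^{\alpha\beta}=0$ in $\mathcal{D}'(\mathbb{R}^{d+1})$.

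Next I would construct the potentials. Fix $j,\alpha,\beta$ and, for each $i=1,\dots,d+1$, solve the periodic Poisson problem $\Delta_{y,s} f_{ij}^{\alpha\beta}=b_{ij}^{\alpha\beta}$ on $Y$ with $\int_Y f_{ij}^{\alpha\beta}=0$, where $\Delta_{y,s}=\sum_{i=1}^{d+1}\partial_{y_i}^2$. The mean-zero condition makes this solvable via Lax--Milgram, and since $b_{ij}^{\alpha\beta}\in L^2(Y)$ (the spatial components involve $A$ and $\nabla_y\chi\in L^2(Y)$, and the last one is just $-\chi_j^{\alpha\beta}\in L^2(Y)$), elliptic regularity on the torus yields $f_{ij}^{\alpha\beta}\in H^2(Y)$. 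Then define
\begin{equation*}
\phi_{kij}^{\alpha\beta}:=\partial_{y_k} f_{ij}^{\alpha\beta}-\partial_{y_i} f_{kj}^{\alpha\beta},
\end{equation*}
which is automatically $1$-periodic, lies in $H^1(Y)$, and is antisymmetric in the pair $(k,i)$. To verify the divergence identity, compute $\sum_{k=1}^{d+1}\partial_{y_k}\phi_{kij}^{\alpha\beta}=b_{ij}^{\alpha\beta}-\partial_{y_i}\!\bigl(\sum_k\partial_{y_k} f_{kj}^{\alpha\beta}\bigr)$; the auxiliary function $\sum_k\partial_{y_k} f_{kj}^{\alpha\beta}$ is periodic and satisfies $\Delta_{y,s}(\sum_k\partial_{y_k} f_{kj}^{\alpha\beta})=\sum_k\partial_{y_k} b_{kj}^{\alpha\beta}=0$ by the divergence-free property, hence is constant and its $\partial_{y_i}$ vanishes.

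The only real point of substance is the divergence-free verification in step one, which is exactly where the time variable intervenes and where the nonstandard entry $b_{(d+1)j}^{\alpha\beta}=-\chi_j^{\alpha\beta}$ earns its keep; once that identity is in hand, the rest is the standard antisymmetric-potential trick and poses no further difficulty.
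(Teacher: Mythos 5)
Your proposal is correct and follows essentially the same route as the paper: establish that the extended vector $(b_{1j}^{\alpha\beta},\dots,b_{dj}^{\alpha\beta},-\chi_j^{\alpha\beta})$ is mean-zero and divergence-free on the $(d+1)$-torus via the corrector equation, solve the periodic Poisson problem $\Delta_{d+1}f_{ij}^{\alpha\beta}=b_{ij}^{\alpha\beta}$, and take the antisymmetric potential $\phi_{kij}^{\alpha\beta}=\partial_{y_k}f_{ij}^{\alpha\beta}-\partial_{y_i}f_{kj}^{\alpha\beta}$, using that the periodic harmonic function $\sum_k\partial_{y_k}f_{kj}^{\alpha\beta}$ is constant. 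No gaps.
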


\begin{proof}
Observe  that by (\ref{corrector}) and (\ref{A}), $b_{ij}^{\alpha\beta} \in L^2 (Y)$ and 
\begin{equation}\label{1.8-1}
\int_{Y} b_{ij}^{\alpha \beta}=0
\end{equation}
for $1\le i\le d+1$.
It follows that  there exist $f_{ij}^{\alpha\beta} \in H^2 (Y)$ such that
\begin{equation}\label{1.8-2}
\begin{cases}
\Delta_{d+1} f_{ij}^{\alpha\beta}=b^{\alpha\beta}_{ij}~~~~\text{ in } \R^{d+1}, \\
f_{ij}^{\alpha\beta} ~~\text{is  1-periodic}~~~\text{ in } \R^{d+1}, 
\end{cases}
\end{equation}
where $\Delta_{d+1}$ denotes  the Laplacian in $\R^{d+1}$. Write 
\begin{align}\label{1.11}
b_{ij}^{\alpha\beta}=
\frac{\partial}{\partial y_k}\left\{\frac{\partial}{\partial y_k}f_{ij}^{\alpha\beta}
-\frac{\partial}{\partial y_i}f_{kj}^{\alpha\beta}\right\}+\frac{\partial}{\partial y_i}\left\{\frac{\partial}{\partial y_k}f_{kj}^{\alpha\beta}\right\},
\end{align}
where the index $k$ is summed from $1$ to $d+1$.
Note that by (\ref{corrector}),
 \begin{equation}\label{div}
 \sum_{i=1}^{d+1} \frac{\partial b^{\alpha\beta}_{ij}}{\partial y_i}
 =\sum_{i=1}^d \frac{\partial}{\partial y_i}b^{\alpha\beta}_{ij}-\frac{\partial}{\partial s}\chi_j^{\alpha\beta}=0.
 \end{equation}
In view of (\ref{1.8-2}) this implies that 
$$
\sum_{i=1}^{d+1}\frac{\partial }{\partial y_i}  f_{ij}^{\alpha\beta}
$$
is harmonic in $\R^{d+1}$. Since it is 1-periodic, it must be constant.
Consequently, by (\ref{1.11}), we obtain 
\begin{align}\label{1.13}
b_{ij}^{\alpha\beta}=\frac{\partial}{\partial y_k}(\phi_{kij}^{\alpha\beta}),
\end{align}
where
\begin{align}\label{1.14}
\phi_{kij}^{\alpha\beta}
=\frac{\partial}{\partial y_k}f_{ij}^{\alpha\beta}
-\frac{\partial}{\partial y_i}f_{kj}^{\alpha\beta}
\end{align}
is 1-periodic and belongs to $H^1(Y)$.
It is easy to see that $\phi_{kij}^{\alpha\beta}=-\phi_{ikj}^{\alpha\beta}$. 
This completes the proof.
\end{proof}

The 1-periodic functions $(\phi_{kij}^{\alpha\beta})$ given by Lemma \ref{1.15}
 are called dual correctors for
the family of parabolic operators $\partial_t +\mathcal{L}_\e$, $\e>0$.
As in the elliptic case \cite{Jikov-1994, KLS2}, 
they play an important role in the study of the problem of convergence rates.
Indeed, to establish the main results of this paper, we shall consider the function 
$w_\e = ( w_\e^\alpha)$, where
\begin{equation}\label{w}
\aligned
w_\e ^\alpha (x, t)  = u_\e^\alpha (x, t) -u_0^\alpha  (x, t) &  -\e \chi_j^{\alpha\beta} (x/\e, t/\e^2)
K_\e \left(\frac{\partial u_0^\beta}{\partial x_j}\right)\\
&-\e^2 \phi_{(d+1) ij}^{\alpha\beta} (x/\e, t/\e^2)
\frac{\partial}{\partial x_i} K_\e \left(\frac{\partial u_0^\beta}{\partial x_j}\right),
\endaligned
\end{equation}
and $K_\e : L^2(\Omega_T) \to C_0^\infty(\Omega_T)$ is a linear operator to be chosen later.
The repeated indices $i, j$ in (\ref{w}) are summed from $1$ to $d$.

\begin{thm}\label{Theorem-2.1}
Let $\Omega$ be a bounded Lipschitz domain in $\R^d$ and
$0<T<\infty$.
Let $u_\e \in L^2(0, T; H^1(\Omega))$ and $u_0\in L^2(0, T; H^2(\Omega))$ be solutions of
the initial-Dirichlet problems (\ref{IDP}) and (\ref{IDP-0}), respectively.
Let $w_\e$ be defined by (\ref{w}).
Then for any $\psi \in L^2(0, T; H^1_0(\Omega))$,
\begin{equation}\label{L-w}
\aligned
 \int_0^T  &\big\langle \partial_t w_\e, \psi\big\rangle_{H^{-1}(\Omega)\times H^1_0(\Omega)}
+\iint_{\Omega_T} A^\e \nabla w_\e \cdot \nabla \psi\\
=& \iint_{\Omega_T}
 (\widehat{a}_{ij} -a_{ij}^\e ) 
\left( \frac{\partial u_0}{\partial x_j} - K_\e \left( \frac{\partial u_0}{\partial x_j}\right) \right) \frac{\partial \psi}{\partial x_i}\\
& - \e \iint_{\Omega_T} 
 a_{ij}^\e \cdot \chi_k^\e \cdot \frac{\partial }{\partial x_j}
K_\e \left(\frac{\partial u_0}{\partial x_k} \right) \cdot \frac{\partial \psi}{\partial x_i} \\
& -\e \iint_{\Omega_T}  
 \phi_{kij}^\e \cdot \frac{\partial}{\partial x_i} K_\e \left(\frac{\partial u_0}{\partial x_j}\right)
 \cdot \frac{\partial \psi}{\partial x_k}\\
&-\e^2 \iint_{\Omega_T} 
 \phi_{k(d+1)j}^\e \cdot \partial_t  K_\e \left(\frac{\partial u_0}{\partial x_j}\right)\cdot\frac{\partial \psi}{\partial x_k} \\
&+ \e \iint_{\Omega_T} 
 a_{ij}^\e \cdot \left(\frac{\partial}{\partial x_j} ( \phi_{(d+1) \ell k} ) \right)^\e \cdot \frac{\partial}{\partial x_\ell}
K_\e \left(\frac{\partial u_0}{\partial x_k} \right) \cdot \frac{\partial \psi}{\partial x_i} \\
&+\e^2 \iint_{\Omega_T} 
 a_{ij}^\e \cdot \phi_{(d+1) \ell k } ^\e \cdot \frac{\partial^2 }{\partial x_j \partial x_\ell} 
K_\e \left(\frac{\partial u_0}{\partial x_k}\right) \cdot \frac{\partial \psi}{\partial x_i},
\endaligned
\end{equation}
where we have suppressed superscripts $\alpha, \beta$ for the simplicity of presentation.
The repeated indices  $i, j, k, \ell$ are summed from  $1$ to $d$.
\end{thm}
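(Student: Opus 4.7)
The plan is to express the left-hand side as a distributional pairing with $\psi$ and compute it by carefully applying the chain rule to $w_\e$ and invoking the corrector and dual-corrector equations. I would first subtract the weak formulations of $(\partial_t+\mathcal{L}_\e)u_\e=F$ and $(\partial_t+\mathcal{L}_0)u_0=F$ tested against $\psi\in L^2(0,T;H^1_0(\Omega))$, so that the $F$'s cancel and we obtain
\[
\int_0^T\langle\partial_t(u_\e-u_0),\psi\rangle+\iint_{\Omega_T}A^\e\nabla(u_\e-u_0)\cdot\nabla\psi=\iint_{\Omega_T}(\widehat{A}-A^\e)\nabla u_0\cdot\nabla\psi.
\]
It then remains to account for the contributions of $v_\e^{(1)}:=\e\chi^\e K_\e(\nabla u_0)$ and $v_\e^{(2)}:=\e^2\phi_{(d+1)}^\e\nabla K_\e(\nabla u_0)$ to $\int\langle\partial_t\cdot,\psi\rangle+\iint A^\e\nabla\cdot\nabla\psi$, which I would process in turn.

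For $v_\e^{(1)}$ the chain rule on $\chi^\e$ produces a factor $(\partial_{y_j}\chi_k)^\e$ in the spatial derivative, which I would rewrite through the algebraic identity $a_{ij}(\partial_{y_j}\chi_k)=b_{ik}-a_{ik}+\widehat{a}_{ik}$ read off from the definition (\ref{b}). The resulting $(a_{ik}^\e-\widehat{a}_{ik})K_\e(\partial_k u_0)\partial_i\psi$ combines with $(\widehat{A}-A^\e)\nabla u_0\cdot\nabla\psi$ to produce the first term on the right-hand side, while the residual $\chi^\e\,\partial K_\e$ piece gives the second. For the time derivative, the corrector equation $\partial_s\chi_j=\partial_{y_i}b_{ij}$ turns $\iint\e\,\partial_t\chi^\e\,K_\e(\nabla u_0)\psi$ into $\iint\partial_{x_i}(b_{ij}^\e)K_j\psi$; one spatial integration by parts then produces a piece $-\iint b_{ij}^\e K_j\partial_i\psi$ that precisely cancels the residual $b$-part of $\iint A^\e\nabla v_\e^{(1)}\cdot\nabla\psi$ left over from the previous algebraic identity.

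For $v_\e^{(2)}$ the chain rule yields the fifth and sixth terms immediately from $\iint A^\e\nabla v_\e^{(2)}\cdot\nabla\psi$. The delicate step is the $\partial_t$ derivative of $\phi_{(d+1)ij}^\e$, which is of $\e^{-2}$ scale; I would rewrite it using the identity $\partial_s\phi_{(d+1)ij}=b_{ij}-\sum_{k=1}^d\partial_{y_k}\phi_{kij}$ that follows immediately from Lemma \ref{1.15}. The $b_{ij}^\e\partial_iK_j\psi$ pieces produced by the $v_\e^{(1)}$ and $v_\e^{(2)}$ computations then cancel against each other, and the remaining $\e\sum_{k=1}^d\partial_{x_k}\phi_{kij}^\e\,\partial_iK_j\psi$, after one integration by parts in $x_k$, contributes exactly the third term; the spurious $\phi_{kij}^\e\,\partial_k\partial_iK_j\psi$ summand vanishes by the antisymmetry $\phi_{kij}=-\phi_{ikj}$ combined with the symmetry of $\partial_k\partial_i$.

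Finally, to obtain the fourth term and dispose of the two remaining mixed pieces $\iint\e\chi^\e\partial_tK_\e(\nabla u_0)\psi$ and $\iint\e^2\phi_{(d+1)}^\e\partial_t\nabla K_\e(\nabla u_0)\psi$, I would invoke the dual representation $\chi_j=-\sum_{k=1}^d\partial_{y_k}\phi_{k(d+1)j}$ obtained from Lemma \ref{1.15} applied to $b_{(d+1)j}=-\chi_j$, together with the vanishing of $\phi_{(d+1)(d+1)j}$ by antisymmetry. After substituting, integrating by parts in $x_k$, and using $\phi_{(d+1)ij}=-\phi_{i(d+1)j}$ to relabel the leftover double-derivative piece, the fourth term drops out and everything else cancels. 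The main obstacle I anticipate is the bookkeeping: the expansion generates a dozen or so terms and every cancellation rests on consistent indexing and consistent use of Lemma \ref{1.15} together with the antisymmetry of $\phi_{kij}$. Boundary terms are not an issue since the spatial integrations by parts are absorbed by $\psi|_{\partial\Omega\times(0,T)}=0$ and the temporal ones are absorbed by the fact that $K_\e$ maps into $C_0^\infty(\Omega_T)$, so that $K_\e(\partial_j u_0)$ has compact support in $(0,T)$.
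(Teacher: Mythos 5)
Your proposal is correct and follows essentially the same route as the paper: both compute $(\partial_t+\mathcal{L}_\e)w_\e$ by subtracting the weak formulations, convert $A\nabla\chi$ into $B-A+\widehat{A}$ via (\ref{b}), use the identity $\partial_s\chi_j=\partial_{y_i}b_{ij}$ and the flux-corrector representation of Lemma \ref{1.15} (including $b_{(d+1)j}=-\chi_j$ and $\phi_{(d+1)(d+1)j}=0$), and kill the double-derivative remainders by the skew-symmetry $\phi_{kij}=-\phi_{ikj}$. The only difference is organizational — you integrate by parts against $\psi$ term by term and cancel integrals, whereas the paper keeps everything in divergence form before pairing with $\psi$ — and all of your claimed cancellations check out.
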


\begin{proof}
Using (\ref{IDP}) and (\ref{IDP-0}), we see that
$$
\aligned
\big(\partial_t +\mathcal{L}_\e) w_\e
 &=(\mathcal{L}_0 -\mathcal{L}_\e) u_0
 -(\partial_t +\mathcal{L}_\e) \left\{ \e \chi_j^\e K_\e \left(\frac{\partial u_0}{\partial x_j}\right)\right\}\\
 &\qquad \qquad \qquad  -(\partial_t +\mathcal{L}_\e )
\left\{ \e^2 \phi_{(d+1) ij}^\e \frac{\partial}{\partial x_i} K_\e \left(\frac{\partial u_0}{\partial x_j} \right)\right\}\\
&=-\frac{\partial}{\partial x_i}
\left\{ (\widehat{a}_{ij} -a_{ij}^\e ) 
\left( \frac{\partial u_0}{\partial x_j} - K_\e \left( \frac{\partial u_0}{\partial x_j}\right) \right) \right\}\\
&\qquad -\frac{\partial}{\partial x_i}
\left\{ (\widehat{a}_{ij} -a_{ij}^\e) K_\e \left(\frac{\partial u_0}{\partial x_j} \right)\right\}
-(\partial_t +\mathcal{L}_\e) \left\{ \e \chi_j^\e K_\e \left(\frac{\partial u_0}{\partial x_j}\right)\right\}\\
 &\qquad \qquad \qquad  -(\partial_t +\mathcal{L}_\e )
\left\{ \e^2 \phi_{(d+1) ij}^\e \frac{\partial}{\partial x_i} K_\e \left(\frac{\partial u_0}{\partial x_j} \right)\right\}.
\endaligned
$$
By computing the third term in the r.h.s. of the equalities above and using (\ref{b}), we obtain 
$$
\aligned
\big(\partial_t +\mathcal{L}_\e) w_\e
&=-\frac{\partial}{\partial x_i}
\left\{ (\widehat{a}_{ij} -a_{ij}^\e ) 
\left( \frac{\partial u_0}{\partial x_j} - K_\e \left( \frac{\partial u_0}{\partial x_j}\right) \right) \right\}\\
&\qquad  +\frac{\partial}{\partial x_i}
\left\{ b_{ij}^\e K_\e \left(\frac{\partial u_0}{\partial x_j} \right)\right\}
+\e \frac{\partial}{\partial x_i}
\left\{ a_{ij}^\e \cdot \chi_k^\e \cdot \frac{\partial}{\partial x_j} K_\e \left(\frac{\partial u_0}{\partial x_k}\right) \right\}\\
&\qquad - \e \partial_t \left\{ \chi_j^\e K_\e \left(\frac{\partial u_0}{\partial x_j}\right)\right\}
-(\partial_t +\mathcal{L}_\e )
\left\{ \e^2 \phi_{(d+1) ij}^\e \frac{\partial}{\partial x_i} K_\e \left(\frac{\partial u_0}{\partial x_j} \right)\right\}.
\endaligned
$$
In view of (\ref{div}) this gives
\begin{equation}\label{2.2-10}
\aligned
\big(\partial_t +\mathcal{L}_\e) w_\e
&=-\frac{\partial}{\partial x_i}
\left\{ (\widehat{a}_{ij} -a_{ij}^\e ) 
\left( \frac{\partial u_0}{\partial x_j} - K_\e \left( \frac{\partial u_0}{\partial x_j}\right) \right) \right\}\\
& \qquad +\e \frac{\partial}{\partial x_i}
\left\{ a_{ij}^\e \cdot \chi_k^\e \cdot \frac{\partial}{\partial x_j} K_\e \left(\frac{\partial u_0}{\partial x_k}\right) \right\}
+b_{ij}^\e\cdot  \frac{\partial}{\partial x_i} K_\e \left(\frac{\partial u_0}{\partial x_j} \right)\\
&\qquad -\e \chi_j^\e \partial_t K_\e \left(\frac{\partial u_0}{\partial x_j}\right)
-(\partial_t +\mathcal{L}_\e )
\left\{ \e^2 \phi_{(d+1) ij}^\e \frac{\partial}{\partial x_i} K_\e \left(\frac{\partial u_0}{\partial x_j} \right)\right\}.
\endaligned
\end{equation}

Next, by Lemma \ref{1.15}, we may write 
$$
\aligned
& b_{ij}^\e\cdot  \frac{\partial}{\partial x_i} K_\e \left(\frac{\partial u_0}{\partial x_j} \right)
-\e \chi_j^\e \partial_t K_\e \left(\frac{\partial u_0}{\partial x_j}\right)\\
& \qquad =\e \frac{\partial}{\partial x_k}  \Big(\phi_{kij}^\e\Big)\cdot \frac{\partial}{\partial x_i}
K_\e \left(\frac{\partial u_0}{\partial x_j}\right)
+\e^2 \partial_t \Big(\phi_{(d+1) ij}^\e \Big) \cdot \frac{\partial}{\partial x_i} K_\e
\left(\frac{\partial u_0}{\partial x_j} \right)\\
&\qquad \qquad +\e^2 \frac{\partial}{\partial x_k} \Big( \phi_{k (d+1) j }^\e\Big )\cdot
\partial_t K_\e \left(\frac{\partial u_0}{\partial x_j} \right),
\endaligned
$$
where we have also used the fact $\phi_{(d+1)(d+1) j}=0$.
Furthermore, by the skew-symmetry in (\ref{1.10}), we see that
$$
\aligned
& b_{ij}^\e\cdot  \frac{\partial}{\partial x_i} K_\e \left(\frac{\partial u_0}{\partial x_j} \right)
-\e \chi_j^\e \partial_t K_\e \left(\frac{\partial u_0}{\partial x_j}\right)\\
& \qquad =\e \frac{\partial}{\partial x_k}  \left\{ \phi_{kij}^\e \cdot \frac{\partial}{\partial x_i}
K_\e \left(\frac{\partial u_0}{\partial x_j}\right)\right\}
+\e^2 \partial_t \left\{ \phi_{(d+1) ij}^\e  \cdot \frac{\partial}{\partial x_i} K_\e
\left(\frac{\partial u_0}{\partial x_j} \right)\right\} \\
&\qquad \qquad +\e^2 \frac{\partial}{\partial x_k} \left\{  \phi_{k (d+1) j }^\e\cdot
\partial_t K_\e \left(\frac{\partial u_0}{\partial x_j} \right)\right\}.
\endaligned
$$
This, combined with (\ref{2.2-10}), gives the desired equation  (\ref{L-w}).
\end{proof}

The next theorem is concerned with the initial-Neumann problem.

\begin{thm}\label{Theorem-2.2}
Let $\Omega$ be a bounded Lipschitz domain in $\R^d$ and
$0<T<\infty$.
Let $u_\e \in L^2(0, T; H^1(\Omega))$ and $u_0\in L^2(0, T; H^2(\Omega))$ be solutions of
the initial-Neumann  problems (\ref{INP}) and (\ref{INP-0}), respectively.
Let $w_\e$ be defined by (\ref{w}).
Then the equation (\ref{L-w}) holds
for any $\psi \in L^2(0, T; H^1(\Omega))$, if $\langle, \rangle$ in its l.h.s. denotes the pairing 
between $H^1(\Omega)$ and its dual.
\end{thm}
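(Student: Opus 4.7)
The strategy is to track the proof of Theorem \ref{Theorem-2.1} verbatim and identify exactly where the Dirichlet boundary condition on $\psi$ was used. The computation of $(\partial_t+\mathcal{L}_\e) w_\e$ in the proof of Theorem \ref{Theorem-2.1} is a pointwise/distributional identity in $\Omega_T$; it makes no use of boundary conditions on $u_\e$, $u_0$, or $\psi$. Therefore the same distributional identity for $(\partial_t+\mathcal{L}_\e)w_\e$ holds for the Neumann solutions. The only place the Dirichlet condition entered was when passing from this distributional identity to the integrated form \eqref{L-w} by pairing with $\psi$: under $\psi\in H^1_0(\Omega)$, the boundary term on $\partial\Omega\times(0,T)$ produced by integrating the divergence-form pieces by parts vanishes trivially.

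In the Neumann setting I would reproduce the integration by parts and then show that all the would-be boundary terms on $\partial\Omega\times(0,T)$ cancel. First, rewrite the pairing $\int_0^T\langle \partial_t w_\e,\psi\rangle+\iint_{\Omega_T} A^\e\nabla w_\e\cdot\nabla\psi$ by splitting $w_\e$ into the three pieces: $u_\e-u_0$, the first-order corrector term $\e\chi^\e K_\e(\nabla u_0)$, and the second-order dual-corrector term $\e^2\phi^\e \nabla K_\e(\nabla u_0)$. For the $u_\e-u_0$ piece, apply the weak formulations of \eqref{INP} and \eqref{INP-0} against $\psi\in L^2(0,T;H^1(\Omega))$; subtracting these yields
\begin{equation*}
\int_0^T\langle \partial_t(u_\e-u_0),\psi\rangle+\iint_{\Omega_T} A^\e\nabla(u_\e-u_0)\cdot\nabla\psi
=\iint_{\Omega_T}(\widehat{A}-A^\e)\nabla u_0\cdot\nabla\psi,
\end{equation*}
where the boundary contributions $\int_0^T\int_{\partial\Omega} g\,\psi$ cancel because both $u_\e$ and $u_0$ carry the same Neumann datum $g$ (even though the conormal operators $\nu_\e$ and $\nu_0$ differ).

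The second observation, which is the one that makes the Neumann version effortless, is that the corrector terms $\e\chi^\e K_\e(\nabla u_0)$ and $\e^2\phi^\e\nabla K_\e(\nabla u_0)$ involve $K_\e(\nabla u_0)\in C_0^\infty(\Omega_T)$; hence these two summands of $w_\e$ vanish in a neighborhood of $\partial\Omega\times(0,T)$ and of $\Omega\times\{0,T\}$. Consequently, every space-derivative integration by parts performed on the corrector contributions during the proof of Theorem \ref{Theorem-2.1}, as well as the $\partial_t$ manipulations that produced the $\phi^\e_{k(d+1)j}\partial_t K_\e(\partial u_0/\partial x_j)$ term, generates no boundary integrals on $\partial\Omega\times(0,T)$ or at the initial/terminal times, regardless of whether $\psi$ vanishes on the lateral boundary. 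Combining these two points, the remainder of the algebraic manipulation — introducing the matrix $B$ via \eqref{b}, invoking Lemma \ref{1.15} to write $b_{ij}^\e=\partial_{y_k}\phi_{kij}^\e$, and exploiting the skew-symmetry \eqref{1.10} to move a derivative onto $K_\e(\nabla u_0)$ — proceeds identically, delivering the same identity \eqref{L-w} with $\langle\,\cdot\,,\,\cdot\,\rangle$ now interpreted as the $H^{-1}(\Omega)$--$H^1(\Omega)$ duality pairing. There is no real obstacle here beyond being careful about the pairing; the compact support of $K_\e(\nabla u_0)$ in $\Omega_T$, built into the choice of $K_\e$, is precisely what renders the Neumann case a cosmetic variant of the Dirichlet case.
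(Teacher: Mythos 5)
Your proposal is correct and takes essentially the same route as the paper: subtract the two Neumann weak formulations (so the $g$-boundary terms cancel), then observe that the corrector contributions involve only $K_\e(\nabla u_0)\in C_0^\infty(\Omega_T)$, so all integrations by parts in the Theorem \ref{Theorem-2.1} computation produce no boundary terms on $\partial\Omega\times(0,T)$ or at $t=0,T$. This is exactly the reasoning the paper gives, just spelled out in a bit more detail.
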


\begin{proof}
It follows from (\ref{INP}) and (\ref{INP-0}) that
$$
\int_0^T \big\langle \partial_t u_\e, \psi \big\rangle +\iint_{\Omega_T} A^\e \nabla u_\e \cdot \nabla \psi
=\int_0^T \big\langle \partial_t u_0, \psi\big \rangle +\iint_{\Omega_T} \widehat{A} \nabla u_0 \cdot \nabla \psi
$$
for any $\psi \in L^2(0, T; H^1(\Omega))$.
This gives 
$$
\aligned
\int_0^T &\big \langle \partial_t w_\e, \psi\big\rangle +\iint_{\Omega_T} A^\e \nabla w_\e \cdot \nabla \psi\\
&=\iint_{\Omega_T} (\widehat{A}-A^\e )\nabla u_0 \cdot \nabla \psi
-\int_0^T \Big\langle (\partial_t +\mathcal{L}_\e) \left\{ \e \chi_j^\e K_\e \left(\frac{\partial u_0}{\partial x_j}\right)\right\}, 
\psi \Big\rangle \\
 & \qquad -\int_0^T \Big\langle 
(\partial_t +\mathcal{L}_\e )
\left\{ \e^2 \phi_{(d+1) ij}^\e \frac{\partial}{\partial x_i} K_\e \left(\frac{\partial u_0}{\partial x_j} \right)\right\},
\psi \Big\rangle,
\endaligned
$$
where we have used the fact $K_\e (\nabla u_0)\in C_0^\infty(\Omega_T)$.
The rest of the proof is similar to that of Theorem \ref{Theorem-2.1}.
We omit the details.
\end{proof}

%%%%%%%%%%%%%%%%%%%%%%%%%%%%%%%%%%

%%%%%%%%%%%%%%%%%%%%%%%%%%%%%%%%

\section{Error estimates  in $L^2(0, T; H^1(\Omega))$}

We begin by introducing a parabolic  smoothing operator.
Fix a nonnegative function  $\theta=\theta (y,s)
\in C_0^\infty(B(0,1))$ such that $\int_{\mathbb{R}^{d+1}}\theta=1$. Define
\begin{equation}\label{1.18}
\aligned
S_\varepsilon(f)(x,t)&=\frac{1}{\varepsilon^{d+2}}\int_{\mathbb{R}^{d+1}}f(x-y,t-s)
\theta(y/\varepsilon,s/\varepsilon^2)\, dyds
\\&=\int_{\mathbb{R}^{d+1}}f(x-\varepsilon y,t-\varepsilon^2 s)\theta(y,s)\, dyds.
\endaligned
\end{equation}

\begin{lemma}\label{lemma-S-1}
Let $S_\varepsilon$ be defined as in (\ref{1.18}). Then
\begin{align}\label{1.19-2}
\| S_\varepsilon (f)\|_{L^2({\mathbb{R}^{d+1}})}\leq \| f \|_{L^2({\mathbb{R}^{d+1}})},
\end{align}
\begin{align}\label{1.19-3}
\e\, \| \nabla S_\varepsilon (f)\|_{L^2({\mathbb{R}^{d+1}})}
+\e^2 \|\nabla^2 S_\e (f)\|_{L^2(\R^{d+1})} \leq C\, \|  f \|_{L^2({\mathbb{R}^{d+1}})},
\end{align}
\begin{align}\label{1.19-4}
\e^2 \| \partial_t S_\varepsilon (f)\|_{L^2({\mathbb{R}^{d+1}})}\leq C\, \| f \|_{L^2({\mathbb{R}^{d+1}})},
\end{align}
where $C$ depends only on $d$.
\end{lemma}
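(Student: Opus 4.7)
The plan is to regard $S_\varepsilon$ as a convolution operator and estimate it via Young's inequality. Writing
\[
\theta_\varepsilon(y,s) = \varepsilon^{-(d+2)} \theta(y/\varepsilon, s/\varepsilon^2),
\]
the first line of (\ref{1.18}) identifies $S_\varepsilon(f) = f * \theta_\varepsilon$. A change of variables gives $\|\theta_\varepsilon\|_{L^1(\R^{d+1})} = \|\theta\|_{L^1(\R^{d+1})} = 1$, so Young's convolution inequality immediately yields (\ref{1.19-2}).

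For (\ref{1.19-3}) and (\ref{1.19-4}) I would move the derivatives onto the kernel. Since $\theta \in C_0^\infty$ is independent of $\varepsilon$, differentiation of $\theta_\varepsilon$ produces explicit negative powers of $\varepsilon$:
\[
\partial_{x_i}\theta_\varepsilon(y,s) = \varepsilon^{-1}\varepsilon^{-(d+2)}(\partial_{y_i}\theta)(y/\varepsilon,s/\varepsilon^2),
\]
\[
\partial_{x_i}\partial_{x_j}\theta_\varepsilon(y,s) = \varepsilon^{-2}\varepsilon^{-(d+2)}(\partial_{y_i}\partial_{y_j}\theta)(y/\varepsilon,s/\varepsilon^2),
\]
\[
\partial_{t}\theta_\varepsilon(y,s) = \varepsilon^{-2}\varepsilon^{-(d+2)}(\partial_{s}\theta)(y/\varepsilon,s/\varepsilon^2),
\]
so that by a change of variables
\[
\|\partial_{x_i}\theta_\varepsilon\|_{L^1} = \varepsilon^{-1}\|\partial_{y_i}\theta\|_{L^1}, \quad \|\partial_{x_i}\partial_{x_j}\theta_\varepsilon\|_{L^1} = \varepsilon^{-2}\|\partial_{y_i}\partial_{y_j}\theta\|_{L^1}, \quad \|\partial_{t}\theta_\varepsilon\|_{L^1} = \varepsilon^{-2}\|\partial_{s}\theta\|_{L^1}.
\]
Since convolution commutes with differentiation, $\partial_{x_i} S_\varepsilon(f) = f * \partial_{x_i}\theta_\varepsilon$, and similarly for higher spatial derivatives and for $\partial_t$. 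Applying Young's inequality to each and collecting the powers of $\varepsilon$ gives (\ref{1.19-3}) and (\ref{1.19-4}), with constants depending only on $d$ through the fixed $L^1$ norms of derivatives of $\theta$.

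There is really no major obstacle here; the argument is the standard mollifier estimate, with the only small point being the parabolic scaling (time rescaled by $\varepsilon^2$ rather than $\varepsilon$), which accounts for the factor $\varepsilon^{-2}$ attached to $\partial_t$ and explains why $\partial_t$ and $\nabla^2$ are estimated with the same weight. If one preferred, the same bounds could alternatively be derived via Plancherel, noting that the Fourier multiplier of $S_\varepsilon$ is $\widehat{\theta}(\varepsilon\xi, \varepsilon^2\tau)$ and that $|\xi|^k|\tau|^\ell|\widehat{\theta}(\varepsilon\xi,\varepsilon^2\tau)| \le \varepsilon^{-k-2\ell}\||\xi|^k|\tau|^\ell \widehat{\theta}\|_{L^\infty}$, but the Young's inequality route is the cleanest and matches the real-variable presentation of the rest of the paper.
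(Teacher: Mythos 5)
Your argument is correct, but it follows a different (equally short) route than the paper: you use Young's convolution inequality with the rescaled kernel $\theta_\varepsilon$, whereas the paper's proof is a one-line appeal to the Plancherel theorem, reading off from the Fourier multiplier $\widehat\theta(\varepsilon\xi,\varepsilon^2\tau)$ that $|\varepsilon\xi_i\,\widehat\theta(\varepsilon\xi,\varepsilon^2\tau)|$, $|\varepsilon^2\xi_i\xi_j\,\widehat\theta(\varepsilon\xi,\varepsilon^2\tau)|$ and $|\varepsilon^2\tau\,\widehat\theta(\varepsilon\xi,\varepsilon^2\tau)|$ are bounded by $\|\,|\eta|\widehat\theta\,\|_\infty$, $\|\,|\eta|^2\widehat\theta\,\|_\infty$ and $\|\,|\sigma|\widehat\theta\,\|_\infty$ respectively. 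The two routes are essentially interchangeable here, and you correctly note the Plancherel alternative yourself. The one small thing your Young's-inequality route buys is that it immediately gives the same estimates in $L^p$ for any $1\le p\le\infty$, not just $p=2$ (which is in fact the spirit of Lemma \ref{lemma-S-3} and Remark \ref{remark-S}); the Plancherel route is $L^2$-specific but is the tool the authors rely on again in Lemma \ref{lemma-S-2}, where one genuinely needs to subtract $\widehat\theta(0,0)$ on the Fourier side, so stating the present lemma via Plancherel keeps the two proofs uniform. Either way your computation of the $L^1$ norms of $\partial_{x_i}\theta_\varepsilon$, $\partial_{x_i}\partial_{x_j}\theta_\varepsilon$, $\partial_t\theta_\varepsilon$ under the parabolic scaling is exactly right and accounts correctly for the matched weights $\varepsilon$ on $\nabla$ and $\varepsilon^2$ on both $\nabla^2$ and $\partial_t$.
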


\begin{proof}
This follows easily from  the Plancherel Theorem.
\end{proof}

\begin{lemma}\label{lemma-S-2}
Let $S_\e$ be defined as in (\ref{1.18}). Then
\begin{equation}\label{S-approx}
\|  \nabla S_\e (f) -\nabla f \|_{L^2(\R^{d+1})}
\le C \e \Big\{ \| \nabla^2 f \|_{L^2(\R^{d+1})} 
+\| \partial_t f \|_{L^2(\R^{d+1})} \Big\},
\end{equation}
where $C$ depends only on $d$.
\end{lemma}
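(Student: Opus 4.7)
The natural approach is to transfer everything to the Fourier side, exactly as in Lemma \ref{lemma-S-1}. Writing $\widehat{S_\e(f)}(\xi,\tau) = \widehat{\theta}(\e\xi,\e^2\tau)\widehat{f}(\xi,\tau)$ on $\mathbb{R}^{d+1}$, the Plancherel theorem reduces the estimate to the pointwise multiplier bound
\begin{equation*}
|\xi|\cdot\bigl|\widehat{\theta}(\e\xi,\e^2\tau)-1\bigr| \le C\e\bigl(|\xi|^2+|\tau|\bigr),
\end{equation*}
since then
\begin{equation*}
\|\nabla S_\e f-\nabla f\|_{L^2}^2
=\int|\xi|^2\bigl|\widehat{\theta}(\e\xi,\e^2\tau)-1\bigr|^2|\widehat f|^2
\le C\e^2\int(|\xi|^4+|\tau|^2)|\widehat f|^2,
\end{equation*}
and the right side equals $C\e^2(\|\nabla^2 f\|_{L^2}^2+\|\partial_t f\|_{L^2}^2)$ by another application of Plancherel.

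To prove the multiplier bound I would split the frequency space into the low-frequency region $\{\e|\xi|\le 1\text{ and }\e^2|\tau|\le 1\}$ and its complement. In the low region, since $\widehat{\theta}$ is smooth with $\widehat{\theta}(0,0)=\int\theta=1$, a first-order Taylor expansion gives $|\widehat{\theta}(\e\xi,\e^2\tau)-1|\le C(\e|\xi|+\e^2|\tau|)$, and therefore $|\xi|\cdot|\widehat{\theta}(\e\xi,\e^2\tau)-1|\le C(\e|\xi|^2+\e^2|\xi||\tau|)$; the second summand is controlled by $C\e|\tau|$ using $\e|\xi|\le 1$, giving the desired bound. In the high region, I use the trivial estimate $|\widehat{\theta}(\e\xi,\e^2\tau)-1|\le 2$ and split further: if $\e|\xi|>1$ then $|\xi|\le \e|\xi|^2$, while if $\e^2|\tau|>1$ then $|\xi|\le 1/\e<\e|\tau|$ (in the remaining subcase where $\e|\xi|\le 1$). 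Either way one obtains $|\xi|\le C\e(|\xi|^2+|\tau|)$.

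The only mild obstacle is the mixed term $\e^2|\xi||\tau|$ in the low-frequency Taylor bound: one has to notice that it should be absorbed into $\e|\tau|$ (not into $\e|\xi|^2$), which is exactly why the hypothesis $\e|\xi|\le 1$ is used in that regime. Everything else is routine Plancherel bookkeeping, and the argument mirrors the proof of Lemma \ref{lemma-S-1}, so I would simply note that the bound follows from the Plancherel theorem together with the elementary multiplier estimate above.
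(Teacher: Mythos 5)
Your proof is correct and follows essentially the same route as the paper: Plancherel reduces the claim to the pointwise multiplier estimate $|\xi|\,|\widehat{\theta}(\e\xi,\e^2\tau)-1|\le C\e(|\xi|^2+|\tau|)$, which is then verified by a low/high frequency case analysis using the Lipschitz bound on $\widehat{\theta}$ near the origin and the trivial boundedness of $\widehat{\theta}$ elsewhere. The only cosmetic difference is that the paper first rescales to $\e=1$ and then splits solely on $|\xi'|\lessgtr 1$ (since the Lipschitz bound $|\widehat{\theta}(\xi',\xi_{d+1})-1|\le C(|\xi'|+|\xi_{d+1}|)$ holds globally, the restriction $\e|\xi|\le1$ alone suffices to absorb the cross term), whereas you split in both frequency variables; your extra subcase is harmless but unnecessary.
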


\begin{proof}
By the Plancherel Theorem it suffices to show that
$$
|\xi_i \widehat{\theta} (\e \xi^\prime, \e^2 \xi_{d+1}) -\xi_i \widehat{\theta}(0, 0)|
\le C\e \big\{ |\xi^\prime|^2 +|\xi_{d+1}| \big\},
$$
where $1\le i\le d$ and $\xi^\prime=(\xi_1, \dots, \xi_d)\in \R^d$.
Furthermore, by a change of variables, one may assume that $\e=1$.
In this case, if $|\xi^\prime|\ge 1$, then
$$
|\xi_i \widehat{\theta} ( \xi^\prime,  \xi_{d+1}) -\xi_i \widehat{\theta}(0, 0)|
\le C |\xi^\prime|\le C (|\xi^\prime|^2 +|\xi_{d+1}|).
$$
If $|\xi^\prime|\le 1$, we have
$$
|\xi_i \widehat{\theta} ( \xi^\prime,  \xi_{d+1}) -\xi_i \widehat{\theta}(0, 0)|
\le C |\xi^\prime| ( |\xi^\prime| +|\xi_{d+1}|)
\le C ( |\xi^\prime|^2 +|\xi_{d+1}|).
$$
This completes the proof.
\end{proof}

\begin{lemma}\label{lemma-S-3}
Let $g=g(y,s)$ be a 1-periodic function in $(y,s)$. Then
\begin{align}\label{1.22}
\| g^\e S_\varepsilon (f)\|_{L^p({\mathbb{R}^{d+1}})}\leq C\,
\| g \|_{L^p(Y)}  \| f \|_{L^p({\mathbb{R}^{d+1}})}
\end{align}
for any $1\le p<\infty$,
where  $g^\e (x, t)= g(x/\e, t/\e^2)$ and $C$ depends only on $d$ and $p$.
\end{lemma}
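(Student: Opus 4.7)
The plan is to combine Jensen's inequality with the $1$-periodicity of $g$ through a change of variables, avoiding any pointwise control of $g^\e$ (which is unavailable since $g$ is merely in $L^p(Y)$). Writing
$$S_\e f (x,t) = \int_{\R^{d+1}} f(x-\e y, t-\e^2 s)\, \theta(y,s)\, dy\, ds,$$
and using that $\theta \ge 0$ with $\int \theta = 1$, Jensen's inequality applied to the probability measure $\theta \, dy \, ds$ yields
$$|S_\e f(x,t)|^p \le \int_{B(0,1)} |f(x-\e y, t-\e^2 s)|^p\, \theta(y,s)\, dy\, ds.$$

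Next, I would multiply both sides by $|g^\e(x,t)|^p$, integrate over $(x,t) \in \R^{d+1}$, apply Fubini, and substitute $u = x - \e y$, $v = t - \e^2 s$ in the inner integral. Since $g^\e(u+\e y, v+\e^2 s) = g(u/\e + y, v/\e^2 + s)$, this gives
$$\|g^\e S_\e f\|_{L^p}^p \le \int_{\R^{d+1}} |f(u,v)|^p \left( \int_{B(0,1)} \theta(y,s)\, |g(u/\e+y, v/\e^2+s)|^p\, dy\, ds \right) du\, dv.$$

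Finally, I would bound the inner integral uniformly in $(u,v)$ and $\e$. Using $\theta \le \|\theta\|_\infty$ and that $B(0,1) \subset [-1,1]^{d+1}$ is covered by $2^{d+1}$ unit cubes, the $1$-periodicity of $(y,s) \mapsto |g(u/\e+y, v/\e^2+s)|^p$ yields, by translation to $Y$,
$$\int_{B(0,1)} |g(u/\e+y, v/\e^2+s)|^p\, dy\, ds \le 2^{d+1} \|g\|_{L^p(Y)}^p,$$
and the stated bound follows. The only mildly subtle point is the use of Jensen to move the $p$-th power inside the kernel, which is what allows periodicity to be exploited after the change of variables; the argument is valid for all $1 \le p < \infty$ with a constant depending only on $d$, $p$, and $\theta$.
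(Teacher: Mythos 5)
Your proof is correct and follows essentially the same route as the paper's: Jensen's inequality (via $\theta\ge 0$, $\int\theta=1$) to bring the $p$-th power inside the convolution, Fubini, and then periodicity of $g$ to bound the resulting integral by $C\|g\|_{L^p(Y)}^p$. The only cosmetic difference is that the paper first rescales to $\e=1$ while you keep $\e$ general and perform the anisotropic change of variables directly; both yield the stated constant.
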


\begin{proof}
Note that $S_\e (f) (x, t)= S_1 (f_\e ) (\e^{-1} x, \e^{-2} t)$, where $f_\e (x, t)= f(\e x ,  \e^2 t)$.
As a result, by a change of variables, it suffices to consider the case $\e=1$.
In this case we first use $\int_{\R^{d+1}} \theta =1$ and H\"older's inequality to obtain 
$$
|S_1 (f) (x, t)|^p
\le \int_{\R^{d+1}} |f(y, s)|^p \, \theta (x-y, t-s)\, dyds.
$$
It follows by Fubini's Theorem that
$$
\aligned
\int_{\R^{d+1}} |g(x,t)|^p | S_1 (f)(x, t)|^p\, dx dt
&\le  \sup_{(y, s)\in \R^{d+1}}
\int_{B((y,s), 1)} | g(x, t)|^p \, dx dt \int_{\R^{d+1}} | f(y, s)|^p\, dyds\\
&\le C\,  \|  g \|^p_{L^p(Y)} 
\| f \|_{L^p(\R^{d+1})}^p,
\endaligned
$$
where $C$ depends only on $d$.
This gives (\ref{1.22}) for the case $\e=1$.
\end{proof}

\begin{remark}\label{remark-S}
{\rm
The same argument as in the proof of Lemma \ref{lemma-S-3} also shows that
\begin{equation}\label{S-remark-1}
\aligned
\| g^\e \nabla S_\e (f)\|_{L^p(\R^{d+1})}
 & \le C \e^{-1} \| g\|_{L^p(Y)} \| f\|_{L^p(\R^{d+1})},\\
\| g^\e \partial_t S_\e (f)\|_{L^p(\R^{d+1})}
 & \le C \e^{-2} \| g\|_{L^p(Y)} \| f\|_{L^p(\R^{d+1})}
 \endaligned
 \end{equation}
}
for $1\le p<\infty$, where $C$ depends only on $d$ and $p$.
\end{remark}

Let $\delta \in (2\e, 20\e)$.
Choose  $\eta_1 \in C_0^\infty(\Omega)$ such that $0\le \eta_1\le 1$,
$\eta_1 (x)=1$ if dist$(x, \partial\Omega)\ge 2\delta$, 
$\eta_1 (x)=0$ if dist$(x, \partial\Omega)\le \delta$, and
$|\nabla_x \eta_1|\le C \delta^{-1}$.
Similarly, we choose $\eta_2\in C_0^\infty(0, T)$ such that $0\le \eta_2\le 1$,
$\eta_2 (t) =1$ if $2\delta^2 \le  t\le T-2\delta^2 $,
$\eta_2 (t)=0$ if $t\le \delta^2 $ or $t>T-\delta^2 $, 
and $| \eta_2^\prime (t)|\le C \delta^{-2}$.
We define the operator $K_\e=K_{\e, \delta}: L^2(\Omega_T) \to C_0^\infty (\Omega_T)$ by 
\begin{equation}\label{K}
K_\e (f) (x, t) = S_\e ( \eta_1  \eta_2 f ) (x, t).
\end{equation}

\begin{lemma}\label{main-lemma-3.1}
Let $\Omega$ be a bounded Lipschitz domain in $\R^d$ and $0<T<\infty$. 
Let $u_\varepsilon, u_0\in L^2(0,T;H^1(\Omega))$ be weak solutions of
(\ref{IDP}) and (\ref{IDP-0}), respectively, for some $F\in L^2(\Omega_T)$.
We further assume that $u_0\in L^2(0, T; H^2(\Omega))$ and
$\partial_t u_0 \in L^2(\Omega_T)$.
Let $w_\e$ be defined by (\ref{w}), where the operator $K_\e$ is given by 
(\ref{K}). Then for any $\psi \in L^2(0, T; H_0^1(\Omega))$,
\begin{equation}\label{main-estimate-3.1}
\aligned
& \Big|  \int_0^T  \big\langle (\partial_t +\mathcal{L}_\e) w_\e,
 \psi \big\rangle_{H^{-1}(\Omega) \times H^1_0(\Omega)} \, dt  \Big| \\
&\le 
C
\Big\{ \| u_0\|_{L^2(0, T; H^2(\Omega))}
+\|\partial_t u_0 \|_{L^2(\Omega_T)}
+\e^{-1/2} \| \nabla u_0\|_{L^2(\Omega_{T, 3\delta})} 
 \Big\}\\
&\qquad \qquad
\cdot  \Big\{ \e  \|\nabla \psi\|_{L^2(\Omega_T)}
+\e^{1/2} \|\nabla \psi\|_{L^2(\Omega_{T, 3\delta})} \Big\},
\endaligned
\end{equation}
where 
\begin{equation}\label{o-e}
\Omega_{T, \delta} =\left( \big\{ x\in \Omega: \, \text{\rm dist}(x, \partial\Omega)\le   \delta \big\}
\times (0, T) \right)\cup \left( \Omega \times (0,  \delta^2)\right) \cup \left(\Omega\times (T-\delta^2, T)\right),
\end{equation}
and $C>0$ depends at most on $d$, $m$, $\mu$, $T$ and $\Omega$.
\end{lemma}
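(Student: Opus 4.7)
Starting from the identity in Theorem~\ref{Theorem-2.1}, the quantity $\int_0^T\langle(\partial_t+\mathcal{L}_\e)w_\e,\psi\rangle\,dt$ is written as a sum of six explicit integrals $I_1,\dots,I_6$, and the task reduces to bounding each $|I_j|$ by the right-hand side of~\eqref{main-estimate-3.1}. The unifying idea is a ``bulk versus boundary layer'' decomposition of each integrand. For any spatial differential operator $\mathcal{D}$ applied to $K_\e(\nabla u_0)=S_\e(\eta_1\eta_2\nabla u_0)$, one uses $\nabla S_\e=S_\e\nabla$ and the Leibniz rule to write
\[
\mathcal{D}\,K_\e(\nabla u_0)\;=\;S_\e\bigl(\eta_1\eta_2\,\mathcal{D}\nabla u_0\bigr)\;+\;S_\e\bigl((\mathcal{D}\eta_1\eta_2)\cdot(\text{derivatives of }u_0)\bigr).
\]
The first summand is controlled in $L^2$ by $\|u_0\|_{L^2(0,T;H^2)}$; combined with Lemma~\ref{lemma-S-3} to treat the periodic factor (corrector, dual corrector, or $a^\e$) and with the explicit $\e$ or $\e^2$ in front of $I_j$, it contributes the targeted $\e\|u_0\|_{L^2(0,T;H^2)}\|\nabla\psi\|_{L^2}$. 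The second summand is supported in $\Omega_{T,3\delta}$ and carries a factor $\delta^{-1}$ or $\delta^{-2}$ coming from differentiating the cutoff; since $\delta\sim\e$, this absorbs against the $\e$'s in front and produces $\|\nabla u_0\|_{L^2(\Omega_{T,3\delta})}\|\nabla\psi\|_{L^2(\Omega_{T,3\delta})}$, matching the factorisation $\bigl(\e^{-1/2}\|\nabla u_0\|_{L^2(\Omega_{T,3\delta})}\bigr)\cdot\bigl(\e^{1/2}\|\nabla\psi\|_{L^2(\Omega_{T,3\delta})}\bigr)$ in~\eqref{main-estimate-3.1}.

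With this device in hand, the terms $I_2, I_3, I_5$ (each of the schematic form $\e\iint g^\e\,\nabla K_\e(\nabla u_0)\,\nabla\psi$) and $I_6=\e^2\iint a^\e\phi^\e\,\nabla^2 K_\e(\nabla u_0)\,\nabla\psi$ follow from Remark~\ref{remark-S} (the $\e^{-1}$ gain when pulling a $\nabla$ out of $S_\e$) applied to each of the two summands above, with one $\nabla$ first commuted inside $S_\e$ for $I_6$. Term $I_4=\e^2\iint\phi^\e\,\partial_t K_\e(\nabla u_0)\,\nabla\psi$ is the first delicate point, because a naive estimate would require $\partial_t\nabla u_0\in L^2$, which is not available under the hypotheses. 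I commute $\partial_t$ with $S_\e$ and use the distributional identity $\partial_t\nabla u_0=\nabla\partial_tu_0$ to rewrite
\[
\partial_t(\eta_1\eta_2\nabla u_0)\;=\;\eta_1\eta_2'\,\nabla u_0\;+\;\nabla(\eta_1\eta_2\,\partial_t u_0)\;-\;\partial_t u_0\,\nabla(\eta_1\eta_2).
\]
The middle piece, after Remark~\ref{remark-S} extracts the $\nabla$, contributes $\e\|\partial_t u_0\|_{L^2}\|\nabla\psi\|_{L^2}$; the first and third give boundary-layer contributions of the expected form.

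The principal obstacle is $I_1=\iint(\widehat{A}-A^\e)(\nabla u_0-K_\e\nabla u_0)\,\nabla\psi$, which requires bounding $\|\nabla u_0-K_\e\nabla u_0\|_{L^2}$ while respecting the same restriction (no $\partial_t\nabla u_0$). I split
\[
\nabla u_0-K_\e\nabla u_0\;=\;(1-\eta_1\eta_2)\nabla u_0\;+\;\bigl[\eta_1\eta_2\nabla u_0-S_\e(\eta_1\eta_2\nabla u_0)\bigr].
\]
The first piece is supported in $\Omega_{T,2\delta}$ and is paired directly with $\nabla\psi$ restricted to $\Omega_{T,3\delta}$. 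For the second, let $g=\eta_1\eta_2\nabla u_0$; I decompose the smoothing error $S_\e g-g$ into its purely spatial and purely temporal components, the spatial component being bounded by $C\e\|\nabla g\|_{L^2}$ and handled via Leibniz as above, while the temporal component is again resolved by the $\partial_t\nabla u_0=\nabla\partial_tu_0$ manoeuvre used for $I_4$, routed so that only $\partial_t u_0\in L^2$ is ever invoked. Summing the six estimates then produces~\eqref{main-estimate-3.1}. The delicate step, and the one I expect to dominate the bookkeeping, is the careful routing of all time-derivatives in $I_1$ and $I_4$ through $\partial_t u_0\in L^2(\Omega_T)$ rather than the uncontrolled $\partial_t\nabla u_0$.
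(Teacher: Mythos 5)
Your proposal is correct in substance and, for five of the six terms coming out of Theorem~\ref{Theorem-2.1}, coincides with the paper's argument: the same Leibniz splitting of $\mathcal{D}K_\e(\nabla u_0)$ into a bulk piece controlled by $\|u_0\|_{L^2(0,T;H^2)}$ and a boundary-layer piece carrying $\delta^{-1}$ or $\delta^{-2}$ from the cutoff, the same use of Lemma~\ref{lemma-S-3} and Remark~\ref{remark-S} for the periodic factors, and the same commutation $\partial_t\nabla u_0=\nabla\partial_t u_0$ followed by extraction of the gradient from $S_\e$ at cost $\e^{-1}$ for the $\partial_t K_\e(\nabla u_0)$ term. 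The one genuine divergence is your treatment of $I_1$. The paper does not decompose the smoothing error in physical space: it splits the domain into $\Omega_{T,2\delta}$ and its complement, replaces $K_\e$ by $S_\e$ in the interior, extends $u_0$ to $\R^{d+1}$ by Calder\'on's extension theorem, and invokes Lemma~\ref{lemma-S-2}, whose Plancherel proof exploits the interaction between the symbol $\xi_i$ of the outer gradient and the parabolic scaling $(\e\xi',\e^2\xi_{d+1})$ to get $\|\nabla S_\e f-\nabla f\|\le C\e\{\|\nabla^2 f\|+\|\partial_t f\|\}$ with only $\partial_t f$, never $\partial_t\nabla f$. Your physical-space route can be made to work, but the phrase ``resolved by the manoeuvre used for $I_4$'' hides the step that actually makes it work, and a careless implementation fails: the purely temporal mollification error of $g=\eta_1\eta_2\nabla u_0$ is \emph{not} of size $\e\|\partial_t g\|$ (and $\partial_t g\notin L^2$ anyway); one must write
\begin{equation*}
S_\e(g)-S_\e^{x}(g)=-\int\theta(y,s)\int_0^{\e^2 s}\partial_\tau g(x-\e y,t-\tau)\,d\tau\,dy\,ds,
\end{equation*}
substitute $\partial_\tau g=\partial_t(\eta_1\eta_2)\nabla u_0+\nabla(\eta_1\eta_2\partial_t u_0)-\nabla(\eta_1\eta_2)\partial_t u_0$, and for the middle term throw the spatial gradient onto the kernel via $\nabla_x[h(x-\e y)]=-\e^{-1}\nabla_y[h(x-\e y)]$, so that the explicit $\e^2$ from the length of the $\tau$-interval beats the $\e^{-1}$ from differentiating $\theta$ and yields the net factor $\e\|\partial_t u_0\|_{L^2(\Omega_T)}$; the remaining cutoff terms land in $\Omega_{T,3\delta}$ and must be paired with $\|\nabla\psi\|_{L^2(\Omega_{T,3\delta})}$ rather than the global norm, since a term $\|\nabla u_0\|_{L^2(\Omega_{T,3\delta})}\|\nabla\psi\|_{L^2(\Omega_T)}$ with no power of $\e$ is \emph{not} dominated by the right-hand side of (\ref{main-estimate-3.1}). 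With that step made explicit your argument is a legitimate, more elementary substitute for Lemma~\ref{lemma-S-2} that also dispenses with the extension of $u_0$ to $\R^{d+1}$; what it costs is exactly this kernel-level bookkeeping, which the Fourier proof packages into one line.
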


\begin{proof}
Using Theorem \ref{Theorem-2.1}, it is not hard to see that the l.h.s. of (\ref{main-estimate-3.1})
is bounded by
\begin{equation}\label{3.4-1}
\aligned
 &C \iint_{\Omega_T}  |\nabla u_0 -K_\e (\nabla u_0)| |\nabla \psi|\\
 &\qquad
 +C \e \iint_{\Omega_T} \Big\{  |\chi^\e| +|\phi^\e| +|(\nabla \phi)^\e|\Big\}  |\nabla K_\e (\nabla u_0)||\nabla \psi|\\
&\qquad
+ C \e^2 \iint_{\Omega_T}
|\phi^\e| \Big\{ |\partial_t K_\e (\nabla u_0)| +|\nabla^2 K_\e (\nabla u_0)|\Big\} |\nabla \psi|\\
&=I_1 +I_2 +I_3,
\endaligned
\end{equation}
where $C$ depends only on $d$, $m$ and $\mu$.
To estimate $I_2$, we note that
\begin{equation}\label{3.4-2}
\nabla K_\e (\nabla u_0)
=\nabla S_\e (\eta_1\eta_2(\nabla u_0))
=S_\e (\nabla (\eta_1 \eta_2)  (\nabla u_0))
+S_\e (\eta_1\eta_2(\nabla^2 u_0)).
\end{equation}
It follows by the Cauchy inequality and Lemma \ref{lemma-S-3} that
$$
\aligned
I_2
 \le  &C \e \left(\iint_{\Omega_T}
| \Big\{  |\chi^\e| +|\phi^\e| +|(\nabla \phi)^\e|\Big\} S_\e (\nabla (\eta_1\eta_2)(\nabla u_0))|^2 \right)^{1/2}
\left(\iint_{\Omega_{T, 3\delta}} |\nabla \psi|^2 \right)^{1/2}\\
&\quad  +C 
\e \left(\iint_{\Omega_T}
| \Big\{  |\chi^\e| +|\phi^\e| +|(\nabla \phi)^\e|\Big\} S_\e ( \eta_1\eta_2(\nabla^2 u_0))|^2 \right)^{1/2}
\left(\iint_{\Omega_T} |\nabla \psi|^2 \right)^{1/2}\\
&\le C \left(\iint_{\Omega_{T, 3\delta}} |\nabla u_0|^2\right)^{1/2}
\left(\iint_{\Omega_{T, 3\delta}} |\nabla \psi|^2\right)^{1/2}\\
&\qquad+C \e  \left(\iint_{\Omega_{T}} |\nabla^2 u_0|^2\right)^{1/2}
\left(\iint_{\Omega_T} |\nabla \psi|^2\right)^{1/2},
\endaligned
$$
where we also have used the observation that $S_\e (\nabla (\eta_1\eta_2)(\nabla u_0))$
is supported in $\Omega_{T, 3\delta}$.
This shows that $I_2$ is bounded by the r.h.s. of (\ref{main-estimate-3.1}).

Next, to handle the term $I_3$, we note that
$$
\aligned
\partial_t K_\e (\nabla u_0)
 &=\partial_t S_\e (\eta_1\eta_2(\nabla u_0))
 =S_\e (\partial_t (\eta_1\eta_2) \nabla  u_0)
+S_\e (\eta_1\eta_2(\nabla \partial_t u_0))\\
& =S_\e (\partial_t (\eta_1\eta_2) \nabla  u_0)
+\nabla S_\e (\eta_1\eta_2 (\partial_t u_0))
-S_\e (\nabla (\eta_1\eta_2) (\partial_t u_0)),
\endaligned
$$
and
$$
\nabla^2 K_\e (\nabla u_0)
=\nabla S_\e (\nabla (\eta_1\eta_2) (\nabla u_0))
+\nabla S_\e (\eta_1 \eta_2( \nabla^2 u_0)).
$$
As in the case of $I_2$, by the Cauchy inequality and Remark \ref{remark-S} , this gives
$$
\aligned
I_3 \le  &C \left(\iint_{\Omega_{T, 3\delta}} |\nabla u_0|^2\right)^{1/2}
\left(\iint_{\Omega_{T, 3\delta}} |\nabla \psi|^2 \right)^{1/2}\\
&+ C\e \left(\iint_{\Omega_{T}} |\partial_t  u_0|^2\right)^{1/2}
\left(\iint_{\Omega_{T}} |\nabla \psi|^2 \right)^{1/2}\\
&+C\e \left(\iint_{\Omega_{T}} |\nabla^2 u_0|^2\right)^{1/2}
\left(\iint_{\Omega_{T}} |\nabla \psi|^2 \right)^{1/2},
\endaligned
$$
which is bounded by the r.h.s. of (\ref{main-estimate-3.1}).

Finally, to estimate $I_1$, we observe that 
\begin{equation}\label{3.4-3}
\aligned
I_1
\le & C\iint_{\Omega_{T, 2\delta}}\Big\{  |\nabla u_0| +S_\e (\eta_1\eta_2 |\nabla u_0|)\Big\} |\nabla \psi|
+C\iint_{\Omega_T\setminus \Omega_{T, 2\delta}}
| (\nabla u_0 -S_\e (\nabla u_0)) | |\nabla \psi|\\
&\le C \left(\iint_{\Omega_{T, 3\delta}} |\nabla u_0|^2\right)^{1/2} 
\left(\iint_{\Omega_{T, 3\delta}} |\nabla \psi|^2 \right)^{1/2}\\
&\qquad 
+ C \left(\iint_{\Omega\setminus \Omega_{T, 2\delta}} |\nabla u_0 -S_\e (\nabla u_0)|^2\right)^{1/2}
\left(\iint_{\Omega_T} |\nabla \psi|^2 \right)^{1/2}.
\endaligned
\end{equation}
To treat the second term in the r.h.s. of (\ref{3.4-3}),
we extend $u_0$ to a function $\widetilde{u}_0$ in $\R^{d+1}$ such that 
$$
\left(\iint_{\R^{d+1}} |\nabla^2 \widetilde{u}_0|^2\right)^{1/2}
+\left(\iint_{\R^{d+1}} |\partial_t \widetilde{u}_0|^2\right)^{1/2}
\le C \Big\{ \| u_0\|_{L^2(0, T; H^2(\Omega))}
+\|\partial_t u_0\|_{L^2(\Omega_T)} \Big\},
$$
using the Calder\'on's  extension theorem.
It follows that
$$
\aligned
\left(\iint_{\Omega\setminus \Omega_{T, 2\delta}} |\nabla u_0 -S_\e (\nabla u_0)|^2\right)^{1/2}
& \le \left(\iint_{\R^{d+1}}
 |\nabla \widetilde{u}_0 -S_\e (\nabla \widetilde{u}_0)|^2\right)^{1/2}\\
 &\le C\e \Big\{ \|\nabla^2 \widetilde{u}_0\|_{L^2(\R^{d+1})}
 +\|\partial_t \widetilde{u}_0\|_{L^2(\R^{d+1})}\Big\} \\
 &\le C\e \Big\{ \| u_0\|_{L^2(0, T; H^2(\Omega))}
+\|\partial_t u_0\|_{L^2(\Omega_T)} \Big\},
\endaligned
$$
where we have used Lemma \ref{lemma-S-2} for the second inequality.
As a result, we see that
$I_1$ is also bounded by the r.h.s. of (\ref{main-estimate-3.1}).
This completes the proof.
\end{proof}

\begin{remark}\label{remark-u-0}
{\rm 
Let $\Omega^\delta=\big\{ x\in \Omega: \text{dist}(x, \partial\Omega)< \delta \big\}$.
Then
\begin{equation}\label{boundary-estimate}
\int_{\Omega^{\delta}} |\nabla u_0|^2
\le C  \delta  \| \nabla u_0\|^2_{H^1(\Omega)}
\end{equation}
(see e.g. \cite{SZ-2015} for a proof).
It follows that
$$
\aligned
\|\nabla u_0\|_{L^2(\Omega_{T, 3\delta})}
 &\le \left(\int_0^T \int_{\Omega^{3\delta}} |\nabla u_0|^2 \right)^{1/2}
+\left(\int_0^{c\e^2} \int_\Omega |\nabla u_0|^2 \right)^{1/2}
+\left(\int_{T-c\e^2}^T \int_\Omega |\nabla u_0|^2\right)^{1/2}\\
&\le C \e^{1/2}
\left\{ \| u_0\|_{L^2(0, T; H^2(\Omega))} +
\sup_{\e^2<t<T}
\left(\frac{1}{\e} \int_{t-\e^2}^t \int_\Omega |\nabla u_0|^2 \right)^{1/2} \right\}.
\endaligned
$$
}
\end{remark}

The next theorem provides an $O(\sqrt{\e})$ error estimate
in $L^2(0, T; H^1_0(\Omega))$ for the initial-Dirichlet problem (\ref{IDP}).

\begin{thm}\label{IDP-H-1}
Let $w_\e$ be defined by (\ref{w}).
Under the same assumptions as in Lemma \ref{main-lemma-3.1}, we have
\begin{equation}\label{estimate-IDP-H-1}
\aligned
&\|\nabla w_\e\|_{L^2(\Omega_T)}\\
&\le C \sqrt{\e}
\left\{ \| u_0\|_{L^2(0, T; H^2(\Omega))}
+\|\partial_t u_0\|_{L^2(\Omega_T)}
+\sup_{\e^2<t<T}
\left(\frac{1}{\e} \int_{t-\e^2}^t \int_\Omega |\nabla u_0|^2 \right)^{1/2} \right\},
\endaligned
\end{equation}
where $C$ depends at most on $d$, $m$, $\mu$, $T$ and $\Omega$.
\end{thm}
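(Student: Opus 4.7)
The plan is to apply Lemma \ref{main-lemma-3.1} with the test function $\psi = w_\e$ itself, then use a parabolic energy identity together with ellipticity to recover $\|\nabla w_\e\|_{L^2(\Omega_T)}^2$ on the left-hand side.

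First I would verify that $w_\e$ is an admissible test function, i.e.\ that $w_\e \in L^2(0,T;H^1_0(\Omega))$ with $w_\e(\cdot,0)=0$ in $L^2(\Omega)$. Since $u_\e = u_0 = g$ on $\partial\Omega\times(0,T)$ and $u_\e(\cdot,0) = u_0(\cdot,0) = h$, the difference $u_\e - u_0$ has vanishing lateral trace and vanishing initial datum. Both corrector terms in (\ref{w}) factor through $K_\e(\nabla u_0)$ or its spatial derivative, which are supported strictly inside $\Omega_T$ because $K_\e$ maps into $C_0^\infty(\Omega_T)$; hence they vanish identically near $\partial\Omega\times(0,T)$ and near $\Omega\times\{t=0\}$. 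Thus $w_\e$ has all the required vanishing.

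Next, substitute $\psi = w_\e$ into (\ref{main-estimate-3.1}). The left-hand side becomes $\int_0^T \langle\partial_t w_\e, w_\e\rangle\,dt + \iint_{\Omega_T} A^\e\nabla w_\e\cdot\nabla w_\e$. The first integral equals $\tfrac{1}{2}\|w_\e(\cdot,T)\|^2_{L^2(\Omega)} - \tfrac{1}{2}\|w_\e(\cdot,0)\|^2_{L^2(\Omega)} \ge 0$ by a standard regularization argument (Lemma \ref{main-lemma-3.1} shows $\psi \mapsto \int_0^T \langle(\partial_t+\mathcal{L}_\e)w_\e,\psi\rangle$ is bounded on $L^2(0,T;H^1_0(\Omega))$, so $\partial_t w_\e \in L^2(0,T;H^{-1}(\Omega))$ and the Lions--Magenes lemma applies). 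Ellipticity gives the second integral $\ge \mu\|\nabla w_\e\|^2_{L^2(\Omega_T)}$. Bounding $\|\nabla w_\e\|_{L^2(\Omega_{T,3\delta})} \le \|\nabla w_\e\|_{L^2(\Omega_T)}$ and using $\e + \e^{1/2} \le C\e^{1/2}$ for small $\e$, one obtains $\mu \|\nabla w_\e\|^2_{L^2(\Omega_T)} \le C\e^{1/2} A\, \|\nabla w_\e\|_{L^2(\Omega_T)}$, where $A$ denotes the bracketed quantity on the right side of (\ref{main-estimate-3.1}). Dividing through gives $\|\nabla w_\e\|_{L^2(\Omega_T)} \le C\e^{1/2} A$.

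Finally, to replace the term $\e^{-1/2}\|\nabla u_0\|_{L^2(\Omega_{T,3\delta})}$ inside $A$ by the form stated in (\ref{estimate-IDP-H-1}), I would invoke the boundary-layer estimate in Remark \ref{remark-u-0}, which yields $\e^{-1/2}\|\nabla u_0\|_{L^2(\Omega_{T,3\delta})} \le C\bigl\{\|u_0\|_{L^2(0,T;H^2(\Omega))} + \sup_{\e^2<t<T}(\e^{-1}\int_{t-\e^2}^t\int_\Omega |\nabla u_0|^2)^{1/2}\bigr\}$. Substituting this into $A$ gives precisely (\ref{estimate-IDP-H-1}). The main (but mild) obstacle is the integration-by-parts identity used in the energy step, which requires $w_\e \in C([0,T];L^2(\Omega))$; this is a routine parabolic regularity check using the distributional equation for $\partial_t w_\e$ implied by Theorem \ref{Theorem-2.1}, and presents no substantive difficulty.
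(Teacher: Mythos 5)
Your proposal is correct and follows essentially the same route as the paper: take $\psi = w_\e$ in Lemma \ref{main-lemma-3.1}, use $\int_0^T\langle\partial_t w_\e, w_\e\rangle \ge 0$ (from $w_\e(\cdot,0)=0$) together with ellipticity to isolate $\mu\|\nabla w_\e\|_{L^2(\Omega_T)}^2$, divide, and then invoke Remark \ref{remark-u-0} to replace the boundary-layer term. The extra justifications you supply (the trace/initial-condition checks on $w_\e$, and the Lions--Magenes regularity making the energy identity rigorous) are accurate and only spell out what the paper leaves implicit.
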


\begin{proof}
Note that $w_\e \in L^2(0, T; H^1_0(\Omega))$ and $w_\e =0$ on $\Omega \times \{ t=0\}$.
It follows that
$$
\aligned
  &\mu \iint_{\Omega_T} |\nabla w_\e|^2
  \le \int_0^T \big \langle (\partial_t +\mathcal{L}_\e) w_\e, w_\e\big\rangle_{H^{-1}(\Omega) \times H^1_0(\Omega) }\\
  &\le C \sqrt{\e} \|\nabla w_\e\|_{L^2(\Omega_T)}
  \Big\{ \| u_0\|_{L^2(0, T; H^2(\Omega))}
+\|\partial_t u_0 \|_{L^2(\Omega_T)}
+\e^{-1/2} \| \nabla u_0\|_{L^2(\Omega_{T, 60\e})} 
 \Big\},
 \endaligned
 $$
 where we have used Lemma \ref{main-lemma-3.1} for the last step.
 This, together with Remark \ref{remark-u-0}, gives (\ref{estimate-IDP-H-1}).
\end{proof}

Next we consider  the initial-Neumann problem (\ref{INP}).

\begin{lemma}\label{main-lemma-3.2}
Let $\Omega$ be a bounded Lipschitz domain in $\R^d$ and $0<T<\infty$. 
Let $u_\varepsilon, u_0\in L^2(0,T;H^1(\Omega))$ be weak solutions of
the initial-Neumann problems (\ref{INP}) and (\ref{INP-0}), respectively,
for some $F\in L^2(\Omega_T)$.
We further assume that $u_0\in L^2(0, T; H^2(\Omega))$ and that
$\partial_t u_0 \in L^2(\Omega_T)$.
Let $w_\e$ be defined by (\ref{w}), where the operator $K_\e$ is given by 
(\ref{K}). Then for any $\psi \in L^2(0, T; H^1(\Omega))$,
\begin{equation}\label{main-estimate-3.2}
\aligned
&  \Big| \int_0^T  \big\langle \partial_t  w_\e, \psi\big \rangle
+\iint_{\Omega_T} A^\e \nabla w_\e \cdot \nabla \psi   \Big| \\
&\le 
C
\Big\{ \| u_0\|_{L^2(0, T; H^2(\Omega))}
+\|\partial_t u_0 \|_{L^2(\Omega_T)}
+\e^{-1/2} \| \nabla u_0\|_{L^2(\Omega_{T, 3\delta})} 
 \Big\}\\
&\qquad \qquad
\cdot  \Big\{ \e  \|\nabla \psi\|_{L^2(\Omega_T)}
+\e^{1/2} \|\nabla \psi\|_{L^2(\Omega_{T, 3\delta})} \Big\},
\endaligned
\end{equation}
where $\langle, \rangle$ denotes the pairing between $H^1(\Omega)$ and its dual.
The constant  $C>0$ depends at most on $d$, $m$, $\mu$, $T$ and $\Omega$.
\end{lemma}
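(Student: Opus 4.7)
The plan is to mirror the proof of Lemma \ref{main-lemma-3.1} essentially verbatim, using Theorem \ref{Theorem-2.2} in place of Theorem \ref{Theorem-2.1}. The point is that the identity (\ref{L-w}) derived there has the same right-hand side, and its derivation integrates by parts only against the compactly supported correctors inside $\Omega_T$; since $K_\e(\nabla u_0) \in C_0^\infty(\Omega_T)$, no boundary contributions arise when $\psi$ is dropped to $L^2(0,T; H^1(\Omega))$. Thus the only real task is to bound the same seven volume integrals appearing on the right-hand side of (\ref{L-w}) by the right-hand side of (\ref{main-estimate-3.2}).

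First I would apply Theorem \ref{Theorem-2.2} to write the quantity to be estimated as a sum of terms of three types, exactly as in (\ref{3.4-1}): the ``smoothing defect'' term $I_1$ involving $\nabla u_0 - K_\e(\nabla u_0)$, the first-order term $I_2$ with factor $\e$ and integrands of the form $|\chi^\e|+|\phi^\e|+|(\nabla\phi)^\e|$ times $|\nabla K_\e(\nabla u_0)|$, and the second-order term $I_3$ with factor $\e^2$ and integrands $|\phi^\e|$ times $|\partial_t K_\e(\nabla u_0)|+|\nabla^2 K_\e(\nabla u_0)|$. For $I_2$ and $I_3$, I would expand $\nabla K_\e(\nabla u_0)$ and $\partial_t K_\e(\nabla u_0)$ via the product rule as in (\ref{3.4-2}), noting that any factor hitting the cutoff $\eta_1\eta_2$ is supported in $\Omega_{T,3\delta}$; then Lemma \ref{lemma-S-3} and Remark \ref{remark-S} reduce the $L^2$-norms of the periodic-times-smoothed expressions to $L^2$-norms of $\eta_1\eta_2 \nabla u_0$, $\nabla u_0$, $\nabla^2 u_0$ and $\partial_t u_0$. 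Cauchy--Schwarz then splits $\psi$ from the $u_0$ data, placing $\|\nabla\psi\|_{L^2(\Omega_{T,3\delta})}$ against terms supported there and $\|\nabla\psi\|_{L^2(\Omega_T)}$ against the rest, exactly producing the bracket on the right of (\ref{main-estimate-3.2}).

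For $I_1$, I would split the domain as in (\ref{3.4-3}) into $\Omega_{T,2\delta}$ and its complement. On $\Omega_{T,2\delta}$ I bound both $|\nabla u_0|$ and $S_\e(\eta_1\eta_2|\nabla u_0|)$ in $L^2(\Omega_{T,3\delta})$ by the support property of $S_\e$. On the complement the cutoff $\eta_1\eta_2$ equals $1$, so after extending $u_0$ to $\widetilde u_0 \in H^1(\R^{d+1})$ with bounded second derivatives and time derivative via Calder\'on's extension theorem, Lemma \ref{lemma-S-2} gives $\|\nabla\widetilde u_0 - S_\e(\nabla\widetilde u_0)\|_{L^2(\R^{d+1})} \le C\e(\|\nabla^2 \widetilde u_0\|+\|\partial_t \widetilde u_0\|)$, which absorbs into the $\|u_0\|_{L^2(0,T;H^2(\Omega))}+\|\partial_t u_0\|_{L^2(\Omega_T)}$ part of (\ref{main-estimate-3.2}).

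The only conceptual point that distinguishes this from the Dirichlet case is that $\psi$ is no longer zero on the lateral boundary, so one must be certain that no boundary terms appear when deriving (\ref{L-w}); this is precisely guaranteed by Theorem \ref{Theorem-2.2}, which was formulated with exactly this purpose in mind. I therefore do not anticipate any real obstacle: once Theorem \ref{Theorem-2.2} is invoked, the estimates of $I_1$, $I_2$, $I_3$ go through word for word as in the proof of Lemma \ref{main-lemma-3.1}.
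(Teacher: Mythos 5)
Your proposal is correct and follows exactly the route the paper takes: invoke Theorem \ref{Theorem-2.2} to obtain the identity (\ref{L-w}) for test functions in $L^2(0,T;H^1(\Omega))$, and then estimate the resulting terms $I_1$, $I_2$, $I_3$ word for word as in the proof of Lemma \ref{main-lemma-3.1}. The paper's own proof is a one-line remark to this effect, and your fuller write-out is an accurate account of those omitted details, including the key observation that $K_\e(\nabla u_0)\in C_0^\infty(\Omega_T)$ prevents any lateral boundary contributions in the Neumann setting.
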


\begin{proof}
This follows from Theorem \ref{Theorem-2.2} by the same argument as in the proof of Lemma 
\ref{main-lemma-3.1}.
\end{proof}

\begin{thm}\label{INP-H-1}
Let $w_\e$ be defined by (\ref{w}).
Under the same assumptions as in Lemma \ref{main-lemma-3.2}, we have
\begin{equation}\label{estimate-INP-H-1}
\aligned
&\|\nabla w_\e\|_{L^2(\Omega_T)}\\
&\le C \sqrt{\e}
\left\{ \| u_0\|_{L^2(0, T; H^2(\Omega))}
+\|\partial_t u_0\|_{L^2(\Omega_T)}
+\sup_{\e^2<t<T}
\left(\frac{1}{\e} \int_{t-\e^2}^t \int_\Omega |\nabla u_0|^2 \right)^{1/2} \right\},
\endaligned
\end{equation}
where $C$ depends at most on $d$, $m$, $\mu$, $T$ and $\Omega$.
\end{thm}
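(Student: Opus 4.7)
The plan is to mirror the proof of Theorem \ref{IDP-H-1}, substituting Lemma \ref{main-lemma-3.2} for Lemma \ref{main-lemma-3.1}. The essential change is that in the Neumann setting the admissible test functions form $L^2(0, T; H^1(\Omega))$ rather than $L^2(0, T; H^1_0(\Omega))$; since $w_\e$ itself lies in $L^2(0, T; H^1(\Omega))$ (no boundary trace required), I would take $\psi = w_\e$ in (\ref{main-estimate-3.2}).

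Before doing so I would verify that $w_\e(\cdot, 0) = 0$. At $t = 0$ the initial conditions give $u_\e = u_0 = h$, so $u_\e - u_0$ vanishes. For the two corrector pieces, the temporal cutoff $\eta_2$ in $K_\e$ vanishes on $[0, \delta^2]$, and the mollifier $S_\e$ has temporal radius $\e^2 \le \delta^2/4$ because $\delta > 2\e$; hence $K_\e(\nabla u_0)(\cdot, 0)$ and $\nabla K_\e(\nabla u_0)(\cdot, 0)$ vanish identically in $x$, so $w_\e(\cdot, 0) = 0$.

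With this initial vanishing in hand, the Hilbert-space chain rule yields $\int_0^T \langle \partial_t w_\e, w_\e\rangle\, dt = \tfrac{1}{2} \|w_\e(\cdot, T)\|_{L^2(\Omega)}^2 \ge 0$. Combining with the ellipticity of $A^\e$ and Lemma \ref{main-lemma-3.2} applied to $\psi = w_\e$, and using $\|\nabla w_\e\|_{L^2(\Omega_{T, 3\delta})} \le \|\nabla w_\e\|_{L^2(\Omega_T)}$, I arrive at
\begin{equation*}
\mu \|\nabla w_\e\|_{L^2(\Omega_T)}^2 \le C \sqrt{\e}\, \|\nabla w_\e\|_{L^2(\Omega_T)} \Big\{ \|u_0\|_{L^2(0,T;H^2(\Omega))} + \|\partial_t u_0\|_{L^2(\Omega_T)} + \e^{-1/2} \|\nabla u_0\|_{L^2(\Omega_{T, 3\delta})}\Big\}.
\end{equation*}
Dividing by $\|\nabla w_\e\|_{L^2(\Omega_T)}$ and then invoking Remark \ref{remark-u-0} (with $\delta \in (2\e, 20\e)$) to replace $\e^{-1/2}\|\nabla u_0\|_{L^2(\Omega_{T,3\delta})}$ by the supremum in (\ref{estimate-INP-H-1}) finishes the estimate.

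The only potentially subtle point is the rigorous justification of the time integration by parts: one needs $\partial_t w_\e \in L^2(0, T; (H^1(\Omega))^*)$ so that the pairing against $w_\e$ makes sense and the chain rule for $\|w_\e(\cdot, t)\|_{L^2(\Omega)}^2$ applies. For the Neumann problem this holds because $\partial_t u_\e$ and $\partial_t u_0$ lie in $L^2(0, T; (H^1(\Omega))^*)$ by the weak formulations of (\ref{INP}) and (\ref{INP-0}) with $F \in L^2(\Omega_T)$, while the corrector pieces are products of bounded $1$-periodic functions with the smooth mollified field $K_\e(\nabla u_0) \in C_0^\infty(\Omega_T)$ and are therefore differentiable in $t$ with values in the same space. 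Once this duality is in place, the argument is just the Neumann analogue of the proof of Theorem \ref{IDP-H-1}.
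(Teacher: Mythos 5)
Your proposal is correct and follows exactly the paper's route: take $\psi = w_\e$ in Lemma \ref{main-lemma-3.2}, use $w_\e(\cdot,0)=0$ and the sign of $\int_0^T \langle \partial_t w_\e, w_\e\rangle$ together with ellipticity, then invoke Remark \ref{remark-u-0}, precisely as in the proof of Theorem \ref{IDP-H-1}. The extra details you supply (the vanishing of the corrector terms at $t=0$ via the temporal cutoff, and the justification of the duality pairing) are sound and merely make explicit what the paper leaves implicit.
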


\begin{proof}
As in the proof of Theorem \ref{IDP-H-1},
this follows from Lemma \ref{main-lemma-3.2}
by letting $\psi =w_\e$.
\end{proof}

\begin{remark}\label{remark-3.10}
{\rm In the case of $u_\e =u_0=0$ or
$\frac{\partial u_\e}{\partial \nu_\e}=\frac{\partial u_0}{\partial \nu_0} =0$ on $\partial\Omega\times (0,T)$,
we may bound the third term in the r.h.s. of (\ref{estimate-INP-H-1}) as follows.
Note that
\begin{equation}\label{3.10-1}
\int_\Omega \widehat{A}\nabla u_0 \cdot \nabla u_0
=-\int_\Omega \partial_t u_0 \cdot u_0 +\int_\Omega F\cdot u_0.
\end{equation}
It follows that
$$
\aligned
\mu \int_{t-\e^2}^t \int_\Omega
|\nabla u_0|^2 
&\le  \int_{t-\e^2}^t \int_\Omega |\partial_t u_0||u_0| +
 \int_{t-\e^2}^t \int_\Omega  |F||u_0| \\
& \le \Big\{  \| \partial_t u_0\|_{L^2(\Omega_T)} +\| F\|_{L^2(\Omega_T)} \Big\}
\left(\int_{t-\e^2}^t \int_\Omega |u_0|^2\right)^{1/2}\\
&\le  \e \Big\{  \| \partial_t u_0\|_{L^2(\Omega_T)} +\| F\|_{L^2(\Omega_T)} \Big\}
\sup_{0<t<T} \| u_0(\cdot, t)\|_{L^2(\Omega)}.
\endaligned
$$
This, together with the standard energy estimates, gives
\begin{equation}\label{3.10-2}
\sup_{\e^2<t<T}
\left(\frac{1}{\e} \int_{t-\e^2}^t \int_\Omega |\nabla u_0|^2 \right)^{1/2} 
\le C \Big\{
\|\partial_t u_0\|_{L^2(\Omega_T)}
+\| F\|_{L^2(\Omega_T)} + \| h \|_{L^2(\Omega)} \Big\},
\end{equation}
where $C$ depends only on $d$, $m$, $\mu$ and $\Omega$.
As a result, for both the initial-Dirichlet problem (\ref{IDP})
and the initial-Neumann problem (\ref{INP}), if $g=0$ on $\partial\Omega \times (0, T)$,
 then
\begin{equation}\label{3.10-3}
\| \nabla w_\e\|_{L^2(\Omega_T)}
\le C \sqrt{\e}
\Big\{ \| u_0\|_{L^2(0, T; H^2(\Omega))}
+\| F\|_{L^2(\Omega_T)}
+\| h\|_{L^2(\Omega)} \Big\},
\end{equation}
where we have used the fact 
$$
\|\partial _t u_0\|_{L^2(\Omega_T)}
\le C \left\{ \|\nabla^2 u_0\|_{L^2(\Omega_T)} +\| F\|_{L^2(\Omega_T)} \right\}.
$$
In particular, if $\Omega$ is $C^{1,1}$, $g=0$ on $\partial\Omega\times (0, T)$ and
 $h=0$ on $\Omega$, then
 \begin{equation}\label{3.10-4}
 \| \nabla w_\e\|_{L^2(\Omega_T)}
 \le C \sqrt{\e} \| F \|_{L^2(\Omega_T)}.
 \end{equation}
 To see this, we use the well-known estimate
 \begin{equation}\label{3.10-5}
 \| u_0\|_{L^2(0, T; H^2(\Omega))} \le C\, \| F\|_{L^2(\Omega_T)},
 \end{equation}
 which may be proved by using the partial Fourier transform in the $t$ variable and
 reducing the problem to the $H^2$ estimate for the elliptic operator  $\mathcal{L}_0$ in $C^{1,1}$ domains.
 We also note that in the case that $g=0$ on $\partial\Omega\times (0, T)$ and $h\in H^1(\Omega; \R^m)$,
 if $\mathcal{L}^*_0=\mathcal{L}_0$ and $\Omega$ is $C^{1,1}$, then
 \begin{equation}\label{3.10-10}
 \| u_0\|_{L^2(0, T; H^2(\Omega))}
 \le C \Big\{ \| F\|_{L^2(\Omega_T)} +\| h\|_{H^1(\Omega)} \Big\}.
 \end{equation}
 This may be proved by using integration by parts as well as  $H^2$ estimates for
 $\mathcal{L}_0$ \cite{Lady}.
 % holds if $\Omega$ is $C^{1,1}$ \cite{Lieberman}.
}
\end{remark}

%%%%%%%%%%%%%%%%%%%%%%%%%%%%%%%%%%%%%%%%%%%%%%

%%%%%%%%%%%%%%%%%%%%%%%%%%%%%%%%%%%%%%%%%%%%%%%

\section{Proof of Theorems \ref{main-theorem-1} and \ref{main-theorem-2}}

In this section we study the convergence rates in $L^2(\Omega_T)$ and
give the proof of Theorems \ref{main-theorem-1} and \ref{main-theorem-2}.
Throughout the section we will assume that $\Omega$ is a bounded
$C^{1,1}$ domain in $\R^d$.

We first consider the initial-Dirichlet problem.
Let $A^*$ denote the adjoint of $A$; i.e., $A^*=( a^{*\alpha\beta}_{ij})$
with $a_{ij}^{*\alpha\beta} (y, s)=a_{ji}^{\beta\alpha}(y,s)$.
For $G\in L^2(\Omega_T)$, let $v_\e$ be the weak solution to 
\begin{equation}\label{IDP-dual}
\left\{
\aligned
(-\partial_t +\mathcal{L}^*_\e) v_\e  & = G &\quad & \text{ in } \Omega \times (0, T),\\
v_\e &=0 &\quad &\text{ on } \partial\Omega \times (0, T),\\
v_\e & =0& \quad & \text{ on } \Omega \times \{ t=T\},
\endaligned
\right.
\end{equation}
where $\mathcal{L}^*_\e =-\text{\rm div} (A^{*\e} (x, t)\nabla )$ denotes the adjoint of $\mathcal{L}_\e$,
and $v_0$ the weak solution to 
\begin{equation}\label{IDP-0-dual}
\left\{
\aligned
(-\partial_t +\mathcal{L}^*_0) v_0  & = G &\quad & \text{ in } \Omega \times (0, T),\\
v_0 &=0 &\quad &\text{ on } \partial\Omega \times (0, T),\\
v_0 & =0& \quad & \text{ on } \Omega \times \{ t=T\},
\endaligned
\right.
\end{equation}
where $\mathcal{L}_0^*=-\text{\rm div} (\widehat{A}^* \nabla )$.
Observe that $v_\e (x, T-t)$ and $v_0 (x, T-t)$ are solutions of the
initial-Dirichlet problems of (\ref{IDP}) and (\ref{IDP-0}), respectively, with 
coefficient matrix $A(x/\e,t/\e^2)$ replaced by $A^*(x/\e, (T-t)/\e^2)$, and with $g=0$ and $h=0$. 
Also note that $A^* (y, T-s)$ satisfies the same ellipticity and periodicity conditions as $A(y,s)$.

\begin{lemma}\label{lemma-4.0}
Let $v_0$ be the weak solution to (\ref{IDP-0-dual}).
Then
\begin{equation}\label{4.0-0}
\| \nabla v_0\|_{L^2(\Omega_T)} +\delta^{-1/2} \|\nabla v_0\|_{L^2(\Omega_{T, \delta})}
\le C \| G \|_{L^2(\Omega_T)},
\end{equation}
where $\delta \in (2\e, 20\e)$ and  $C$ depends at most on $d$, $m$, $\mu$, $T$ and $\Omega$.
\end{lemma}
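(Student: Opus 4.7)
The plan is to prove the two bounds separately. For $\|\nabla v_0\|_{L^2(\Omega_T)}\le C\|G\|_{L^2(\Omega_T)}$, I would test the weak formulation of (\ref{IDP-0-dual}) against $v_0$ itself. Since $v_0(\cdot,T)=0$ and $v_0$ vanishes on $\partial\Omega\times(0,T)$, integration by parts in $t$ yields $-\int_0^T\!\int_\Omega\partial_t v_0\cdot v_0=\tfrac12\int_\Omega|v_0(x,0)|^2\ge 0$, so ellipticity plus Poincar\'e closes the loop. The same identity carried out on $[t,T]$ for arbitrary $t$ also gives $\sup_{0<t<T}\|v_0(\cdot,t)\|_{L^2(\Omega)}\le C\|G\|_{L^2(\Omega_T)}$, which will be needed below. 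The $C^{1,1}$ hypothesis on $\Omega$ allows me to upgrade this to the companion bounds
\[
\|v_0\|_{L^2(0,T;H^2(\Omega))}+\|\partial_t v_0\|_{L^2(\Omega_T)}\le C\|G\|_{L^2(\Omega_T)},
\]
via the backward analogue of (\ref{3.10-5}), i.e.\ partial Fourier transform in $t$ combined with the standard elliptic $H^2$ regularity for $\mathcal{L}_0^*$ on a $C^{1,1}$ domain.

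For the weighted boundary term I would decompose $\Omega_{T,\delta}$ as in (\ref{o-e}) into the spatial collar $\Omega^\delta\times(0,T)$ and the two time slabs $\Omega\times(0,\delta^2)$ and $\Omega\times(T-\delta^2,T)$. On the spatial collar I invoke the boundary-layer estimate (\ref{boundary-estimate}) pointwise in $t$, $\int_{\Omega^\delta}|\nabla v_0(\cdot,t)|^2\le C\delta\|\nabla v_0(\cdot,t)\|^2_{H^1(\Omega)}$; integrating in $t$ and using the $H^2$ bound above produces $\int_0^T\!\int_{\Omega^\delta}|\nabla v_0|^2\le C\delta\|G\|^2_{L^2(\Omega_T)}$, as required.

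For the two time slabs I would adapt the argument of Remark \ref{remark-3.10}. Testing (\ref{IDP-0-dual}) against $v_0$ at fixed $t$ and rearranging gives $\int_\Omega\widehat{A}^*\nabla v_0\cdot\nabla v_0=\int_\Omega\partial_t v_0\cdot v_0+\int_\Omega G\cdot v_0$. Integrating this identity over $[t-\delta^2,t]$ and applying Cauchy--Schwarz yields
\[
\mu\int_{t-\delta^2}^t\!\int_\Omega|\nabla v_0|^2\le\big\{\|\partial_t v_0\|_{L^2(\Omega_T)}+\|G\|_{L^2(\Omega_T)}\big\}\cdot\delta\cdot\sup_{0<\tau<T}\|v_0(\cdot,\tau)\|_{L^2(\Omega)},
\]
which the previous paragraph bounds by $C\delta\|G\|^2_{L^2(\Omega_T)}$. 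Choosing $t=\delta^2$ and $t=T$ handles the slabs $(0,\delta^2)$ and $(T-\delta^2,T)$ respectively. Adding the three contributions gives $\|\nabla v_0\|^2_{L^2(\Omega_{T,\delta})}\le C\delta\|G\|^2_{L^2(\Omega_T)}$, which is exactly the weighted estimate claimed. The only non-routine input is the global $H^2$-and-$\partial_t$ regularity for $v_0$; everything else is a direct transcription of the arguments used in Remarks \ref{remark-u-0} and \ref{remark-3.10}.
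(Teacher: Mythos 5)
Your argument is correct and is essentially the paper's own proof written out in full: the paper simply cites the energy estimate for the first term and refers to Remarks \ref{remark-u-0} and \ref{remark-3.10} for the weighted boundary term, which is exactly the collar-plus-time-slab decomposition you carry out, with the $H^2$ and $\partial_t$ bounds supplied by the time-reversed version of (\ref{3.10-5}). No gaps.
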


\begin{proof}
The estimate  for $\|\nabla v_0\|_{L^2(\Omega_T)}$ follows directly from the energy estimate,
while the estimate for $\delta^{-1/2} \|\nabla v_0\|_{L^2(\Omega_{T, \delta})}$
is proved in Remarks \ref{remark-u-0} and  \ref{remark-3.10}.
\end{proof}

Let
\begin{equation}\label{z}
\aligned
z_\e (x, t)= v_\e (x, T-t) -v_0 (x, T-t)-  & \e \chi_{T, j}^{*\e} S_\e \left(\widetilde{\eta}(x,t) \frac{\partial v_0}{\partial x_j} 
 (x, T-t)\right) \\
- &\e^2 \phi_{T, (d+1)ij}^{*\e} \frac{\partial}{\partial x_i}
S_\e \left( \widetilde{\eta}(x,t) \frac{\partial v_0}{\partial x_j} (x, T-t)\right),
\endaligned
\end{equation}
where $\chi_T^*$ and $\phi_T^*$ denote the correctors and dual correctors, respectively,
 for the family of parabolic operators $\partial_t +\text{div} (A^*(x/\e, (T-t)/\e^2)\nabla )$, $\e>0$.
The cut-off  function $\widetilde{\eta}$ in (\ref{z}) is chosen so that
$\widetilde{\eta} (x,t)=0$ if $(x, t)\in \Omega_{T, 10\e}$, $\eta(x, t)=1$
if $(x, t)\in \Omega_T \setminus \Omega_{T, 15\e}$,
$|\nabla \widetilde{\eta} |\le C \e^{-1}$ and $|\partial_t \widetilde{\eta}|\le C \e^{-2}$.

\begin{lemma}\label{lemma-4.1}
Let $z_\e$ be defined by (\ref{z}). Then
\begin{equation}\label{4.1-0}
\| \nabla z_\e\|_{L^2(\Omega_T)} \le C \sqrt{\e} \| G\|_{L^2(\Omega_T)},
\end{equation}
where $C$ depends at most on $d$, $m$, $\mu$, $T$ and $\Omega$.
\end{lemma}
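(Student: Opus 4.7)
The strategy is to recognize $z_\e$ as precisely the analog of $w_\e$ from (\ref{w}) applied to a time-reversed problem, and then to invoke (a minor variant of) Theorem \ref{IDP-H-1}. First I would set $\widetilde{v}_\e(x,t)=v_\e(x,T-t)$ and $\widetilde{v}_0(x,t)=v_0(x,T-t)$. The function $\widetilde{v}_\e$ solves an initial--Dirichlet problem of the form (\ref{IDP}) on $\Omega_T$, with coefficient matrix $\widetilde{A}(y,s)=A^*(y,-s)$ (both $1$-periodic and satisfying (\ref{ellipticity})), with source $\widetilde{F}(x,t)=G(x,T-t)$, and with $g=0$, $h=0$; the homogenized problem is solved by $\widetilde{v}_0$. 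Under the identification $\chi\leftrightarrow \chi_T^*$ and $\phi\leftrightarrow \phi_T^*$, the function $z_\e$ is exactly $w_\e$ in (\ref{w}) for this problem, with the smoothing operator $K_\e$ replaced by
\[
\widetilde{K}_\e(f)(x,t) = S_\e\bigl(\widetilde{\eta}\, f\bigr)(x,t).
\]

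Next I would observe that the cutoff $\widetilde{\eta}$ plays exactly the role of $\eta_1\eta_2$ in (\ref{K}): its support is contained in $\Omega_T\setminus\Omega_{T,10\e}$, it equals $1$ on $\Omega_T\setminus\Omega_{T,15\e}$, and $|\nabla\widetilde{\eta}|\lesssim\e^{-1}$, $|\partial_t\widetilde{\eta}|\lesssim\e^{-2}$ (so $\delta\sim\e$). These are the only properties of the cutoff used in the proofs of Lemma \ref{main-lemma-3.1} and Theorem \ref{IDP-H-1}, so the argument there goes through verbatim (every splitting of $\nabla K_\e$, $\partial_t K_\e$, $\nabla^2 K_\e$ uses only the product-rule and the corresponding bounds on $\nabla(\eta_1\eta_2)$, $\partial_t(\eta_1\eta_2)$). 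Applied to $\widetilde{v}_\e$, this yields
\[
\aligned
\|\nabla z_\e\|_{L^2(\Omega_T)}
\le C\sqrt{\e}\,\Bigl\{&\|\widetilde{v}_0\|_{L^2(0,T;H^2(\Omega))}+\|\partial_t\widetilde{v}_0\|_{L^2(\Omega_T)}\\
&\qquad+\sup_{\e^2<t<T}\Bigl(\tfrac{1}{\e}\int_{t-\e^2}^t\!\int_\Omega|\nabla\widetilde{v}_0|^2\Bigr)^{1/2}\Bigr\}.
\endaligned
\]

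To finish, I would bound each term by $C\|G\|_{L^2(\Omega_T)}$. Since $\widetilde{v}_0$ solves a homogenized initial--Dirichlet problem on the $C^{1,1}$ domain $\Omega$ with $g=0$, $h=0$ and right-hand side $G(\cdot,T-\cdot)$, estimate (\ref{3.10-5}) gives $\|\widetilde{v}_0\|_{L^2(0,T;H^2(\Omega))}\le C\|G\|_{L^2(\Omega_T)}$, and the standard energy estimate controls $\|\partial_t\widetilde{v}_0\|_{L^2(\Omega_T)}$ by the same quantity. The supremum term is controlled by Lemma \ref{lemma-4.0} (equivalently, by (\ref{3.10-2}) with $h=0$ and $F$ replaced by $G$), giving again $\le C\|G\|_{L^2(\Omega_T)}$. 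Combining the three bounds yields (\ref{4.1-0}).

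The only point requiring a touch of care, and thus the main potential obstacle, is verifying that every step in the proof of Lemma \ref{main-lemma-3.1} tolerates the replacement of the product cutoff $\eta_1(x)\eta_2(t)$ by the single space--time cutoff $\widetilde{\eta}(x,t)$. In practice, the proof only uses that $\widetilde{K}_\e$ maps $L^2(\Omega_T)$ into $C_0^\infty(\Omega_T)$, that $S_\e(\nabla\widetilde{\eta}\cdot f)$ and $S_\e(\partial_t\widetilde{\eta}\cdot f)$ are supported in a parabolic $O(\e)$-neighborhood of the parabolic boundary, and that the $L^\infty$ norms of the derivatives of $\widetilde{\eta}$ obey the bounds above; all of these hold by construction.
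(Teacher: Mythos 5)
Your argument is correct and takes essentially the same route as the paper: the paper's proof is the one-liner ``since $A^*(y,T-s)$ satisfies the same ellipticity and periodicity conditions as $A(y,s)$ and $\Omega$ is $C^{1,1}$, this follows from (\ref{3.10-4}),'' and your proposal simply unpacks exactly that chain (time-reversal, identify $z_\e$ with $w_\e$ for the reflected problem, apply the $O(\sqrt{\e})$ energy estimate from Remark \ref{remark-3.10} together with (\ref{3.10-5})). Your observation that only the structural properties of the cutoff $\widetilde{\eta}$ matter, not the product form $\eta_1\eta_2$, is the right thing to check and is indeed harmless.
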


\begin{proof}
Since $A^* (y, T-s)$ satisfies the same ellipticity and periodicity conditions as $A(y,s)$ and
$\Omega$ is $C^{1,1}$,
this follows from the estimate (\ref{3.10-4}).
\end{proof}

We are in a position to give the proof of Theorem \ref{main-theorem-1}.

\begin{proof}[\bf Proof of Theorem \ref{main-theorem-1}]

Let $u_\e\in L^2(0, T; H^1(\Omega))$ and $u_0\in L^2(0, T; H^2(\Omega))$ be solutions of
(\ref{IDP}) and (\ref{IDP-0}), respectively.
Let $G\in L^2(\Omega_T)$. By duality it suffices to show that
\begin{equation}\label{4.3-1}
\aligned
 & \Big| \iint_{\Omega_T} (u_\e -u_0) \cdot G \Big|\\
&\le 
C \e \| G\|_{L^2(\Omega_T)}
\left\{ \| u_0\|_{L^2(0, T; H^2(\Omega))}
+\|\partial_t u_0\|_{L^2(\Omega_T)}
+\sup_{\e^2<t<T}
\left(\frac{1}{\e} \int_{t-\e^2}^t \int_\Omega |\nabla u_0|^2 \right)^{1/2} \right\}.
\endaligned
\end{equation}
Let $w_\e$ be defined by (\ref{w}), with $\delta=2\e$.
Since
$$
\| \chi^\e  K_\e (\nabla u_0)\|_{L^2(\Omega_T)}
+\e \| \phi^\e  \nabla K_\e (\nabla u_0)\|_{L^2(\Omega_T)}
\le C \|\nabla u_0\|_{L^2(\Omega_T)},
$$
we only need to prove that $\displaystyle | \iint_{\Omega_T} w_\e \cdot G|$ is bounded
by the r.h.s. of (\ref{4.3-1}). 

To this end we write
\begin{equation}\label{4.3-2}
\aligned
  &\iint_{\Omega_T} w_\e \cdot G
 =  \int_0^T \big\langle \partial_t w_\e, v_\e \big\rangle +\iint_{\Omega_T} A^\e \nabla w_\e \cdot \nabla v_\e\\
 &= \left\{  \int_0^T \big\langle \partial_t w_\e, z_\e (\cdot, T-t)\big\rangle
 +\iint_{\Omega_T} A^\e \nabla w_\e \cdot \nabla z_\e (x, T-t)\right\}\\
 &\quad +\left\{  \int_0^T\big \langle \partial_t w_\e, v_0 \big\rangle
 +\iint_{\Omega_T} A^\e \nabla w _\e\cdot \nabla v_0\right\} \\
 &\quad +\left\{ \int_0^T\big \langle \partial_t w_\e, v_\e -v_0 -z_\e (\cdot, T-t)\big \rangle
 + \iint_{\Omega_T} A^\e \nabla w_\e \cdot
 \big\{ v_\e -v_0 - z_\e (\cdot, T-t) \big\}\right\}\\
 &=J_1 +J_2 +J_3,
 \endaligned
 \end{equation}
 where $\langle, \rangle$ denotes the pairing between $H^1_0(\Omega)$ and its dual $H^{-1}(\Omega)$.
 We shall use Lemma \ref{main-lemma-3.1} to bound $J_1$, $J_2$ and $J_3$.
 
 For the term $J_1$, it follows by Lemma \ref{main-lemma-3.1} that
 \begin{equation}\label{4.3-3}
 \aligned
 |J_1| & \le C \sqrt{\e} 
 \Big \{ \| u_0\|_{L^2(0, T; H^2(\Omega))} +\|\partial_t u_0\|_{L^2(\Omega_T)}
 +\e^{-1/2} \|\nabla u_0\|_{L^2(\Omega_{T, 6\e})} \Big\}
 \| \nabla z_\e\|_{L^2(\Omega_T)}\\
 & \le C \e \Big \{ \| u_0\|_{L^2(0, T; H^2(\Omega))} +\|\partial_t u_0\|_{L^2(\Omega_T)}
 +\e^{-1/2} \|\nabla u_0\|_{L^2(\Omega_{T, 6\e})} \Big\} \| G\|_{L^2(\Omega_T)},
 \endaligned
 \end{equation}
 where we have used Lemma \ref{lemma-4.1} for the last step.
 
 Next, for $J_2$, we obtain 
 \begin{equation}\label{4.3-4}
 \aligned
 |J_2|
 &\le C 
 \Big \{ \| u_0\|_{L^2(0, T; H^2(\Omega))} +\|\partial_t u_0\|_{L^2(\Omega_T)}
 +\e^{-1/2} \|\nabla u_0\|_{L^2(\Omega_{T, 6\e})} \Big\}\\
&\qquad\qquad \qquad\qquad
\cdot
 \Big\{ \e \| \nabla v_0\|_{L^2(\Omega)} +\e^{1/2} \|\nabla v_0\|_{L^2(\Omega_{T, 6\e})} \Big\}\\
 &\le C \e \Big \{ \| u_0\|_{L^2(0, T; H^2(\Omega))} +\|\partial_t u_0\|_{L^2(\Omega_T)}
 +\e^{-1/2} \|\nabla u_0\|_{L^2(\Omega_{T, 6\e})} \Big\}\| G\|_{L^2(\Omega_T)},
 \endaligned
 \end{equation}
 where we have used Lemma \ref{lemma-4.0} for the last inequality.
 
 To estimate $J_3$, we note that 
 $v_\e -v_0 -z_\e(x, T-t)$ is supported in $\Omega_T\setminus \Omega_{T, 10\e}$ and
 in view of (\ref{z}) and Lemmas \ref{lemma-S-1} and \ref{lemma-S-3},
 $$
 \|\nabla (v_\e -v_0 -z_\e (x, T-t))\|_{L^2(\Omega_T)}
 \le C \| \nabla v_0\|_{L^2(\Omega_T)}\le C \| G\|_{L^2(\Omega_T)}.
 $$
 It follows by Lemma \ref{main-lemma-3.1} that
 $$
 |J_3|
 \le C \e \Big \{ \| u_0\|_{L^2(0, T; H^2(\Omega))} +\|\partial_t u_0\|_{L^2(\Omega_T)}
 +\e^{-1/2} \|\nabla u_0\|_{L^2(\Omega_{T, 6\e})} \Big\} \| G\|_{L^2(\Omega_T)}.
 $$
 This, together with (\ref{4.3-3}) and (\ref{4.3-4}), shows that 
 $$
 \aligned
 &\Big| \iint_{\Omega_T} w_\e \cdot G\Big|\\
& \le C \e\Big \{ \| u_0\|_{L^2(0, T; H^2(\Omega))} +\|\partial_t u_0\|_{L^2(\Omega_T)}
 +\e^{-1/2} \|\nabla u_0\|_{L^2(\Omega_{T, 6\e})} \Big\} \| G\|_{L^2(\Omega_T)}\\
 & \le C \e\left \{ \| u_0\|_{L^2(0, T; H^2(\Omega))} +\|\partial_t u_0\|_{L^2(\Omega_T)}
 +\sup_{\e^2<t<T}
\left(\frac{1}{\e} \int_{t-\e^2}^t \int_\Omega |\nabla u_0|^2 \right)^{1/2}
 \right\} \| G\|_{L^2(\Omega_T)},
 \endaligned
$$
which completes the proof.
\end{proof}

Finally, we give the proof of Theorem \ref{main-theorem-2}

\begin{proof}[\bf Proof of Theorem \ref{main-theorem-2}]
The proof of Theorem \ref{main-theorem-2}
is similar to that of Theorem \ref{main-theorem-1}.
Indeed, let $u_\e\in L^2(0, T; H^1(\Omega))$ and $u_0\in L^2(0,T; H^2(\Omega))$
be solutions of (\ref{INP}) and (\ref{INP-0}), respectively.
Let $w_\e$ be defined as in (\ref{w}), with $\delta=2\e$. 
To estimate $\| u_\e -u_0\|_{L^2(\Omega_T)}$, we consider 
$\displaystyle \iint_{\Omega_T} w_\e \cdot G$, where $G\in L^2(\Omega_T)$.
Let $v_\e$ be the weak solution to 
\begin{equation}\label{INP-dual}
\left\{
\aligned
(-\partial_t +\mathcal{L}^*_\e) v_\e  & = G &\quad & \text{ in } \Omega \times (0, T),\\
\frac{\partial v_\e}{\partial \nu^*_\e} &=0 &\quad &\text{ on } \partial\Omega \times (0, T),\\
v_\e & =0& \quad & \text{ on } \Omega \times \{ t=T\},
\endaligned
\right.
\end{equation}
and $v_0$ the weak solution to 
\begin{equation}\label{INP-dual-0}
\left\{
\aligned
(-\partial_t +\mathcal{L}^*_0) v_0  & = G &\quad & \text{ in } \Omega \times (0, T),\\
\frac{\partial v_0}{\partial \nu^*_0} &=0 &\quad &\text{ on } \partial\Omega \times (0, T),\\
v_0 & =0& \quad & \text{ on } \Omega \times \{ t=T\},
\endaligned
\right.
\end{equation}
where $\frac{\partial v_\e}{\partial\nu^*_\e}$ and
$\frac{\partial v_0}{\partial\nu_0^*}$ denote the conormal derivatives
associated with the operators $\mathcal{L}_\e^*$ and $\mathcal{L}_0^*$,
respectively. Let 
$z_\e$ be defined as before. 
Note that estimates in Lemmas \ref{lemma-4.0} and \ref{lemma-4.1} continue to hold.
Moreover, by (\ref{INP-dual}), we have
$$
\iint_{\Omega_T} w_\e \cdot G
=\int_0^T \big\langle \partial_t w_\e, v_\e\big\rangle 
+\iint_{\Omega_T} A^\e \nabla w_\e \cdot \nabla v_\e,
$$
where $\langle, \rangle$ denotes the pairing between $H^1(\Omega)$ and its dual.
With Lemma \ref{main-lemma-3.2}  at our disposal, the rest of the proof
is exactly the same as that of Theorem \ref{main-theorem-1}.
We omit the details.
\end{proof}

 \bibliographystyle{amsplain}
 
\bibliography{convergence.bbl}

\bigskip

\begin{flushleft}
Jun Geng,
School of Mathematics and Statistics,
Lanzhou University,
Lanzhou, P.R. China.

E-mail:gengjun@lzu.edu.cn
\end{flushleft}

\begin{flushleft}
Zhongwei Shen,
Department of Mathematics,
University of Kentucky,
Lexington, Kentucky 40506,
USA.

E-mail: zshen2@uky.edu
\end{flushleft}

\bigskip

\medskip

\end{document}